\numberwithin{equation}{section}
\newcommand{\thefont}[2]{\fontsize{#1}{#2}\fontshape{n}\selectfont}
\newcommand{\ind}{\rlap{\thefont{10pt}{12pt}1}\kern.16em\rlap{\thefont{11pt}{13.2pt}1}\kern.4em}
\def\argmin{\mathop{\rm arg \; min}\limits}%
\newtheorem{prop}{Proposition}[section]
\newtheorem{coro}{Corollary}[section]
\newtheorem{theo}{Theorem}[section]
\newtheorem{lem}{Lemma}[section]
\newtheorem{ass}{Assumption}
\newtheorem{rem}{Remark}[section]
\renewcommand{\P}{{\mathbb P}}
\newcommand{\E}{{\mathbb E}}
\newcommand{\EE}{{\mathbb E}}
\newcommand{\N}{{\mathbb N}}
\newcommand{\RR}{{\mathbb R}}
\newcommand{\FF}{{\cal F}}
\newcommand{\Ss}{{\mathbb S}}
\newcommand{\BB}{\ensuremath{\mathcal B}}
\newcommand{\MM}{\ensuremath{\mathcal M}}
\newcommand{\PP}{\ensuremath{\mathcal P}}
\newcommand{\1}{\rlap{\thefont{10pt}{12pt}1}\kern.16em\rlap{\thefont{11pt}{13.2pt}1}\kern.4em}
\newcommand{\btheta}{\mbox{\boldmath$\theta $}}
\newcommand{\bvarphi}{\mbox{\boldmath$\varphi $}}
\newcommand{\bb}{{\mathbf b}}
\newcommand{\bq}{{\mathbf q}}
\newcommand{\ba}{{\mathbf a}}
\newcommand{\bY}{{\mathbf Y}}
\newcommand{\bX}{{\mathbf X}}
\newcommand{\bmu}{\mbox{$\boldsymbol{\mu}$}}
\title{Characterization of barycenters in the Wasserstein space by averaging optimal transport maps}
\author{ J\'er\'emie Bigot$^{1}$\footnote{J. Bigot is a member of Institut Universitaire de France.}  \hspace{0.05cm}  and  Thierry Klein$^{2,3}$  \vspace{0.3cm}  \  \\
 Institut de Math\'ematiques de Bordeaux et CNRS  (UMR 5251)$^{1}$  \\ Universit\'e de Bordeaux  \vspace{0.1cm}  \\ 
ENAC - Ecole Nationale de l'Aviation Civile \\ Universit\'e de Toulouse, France
\vspace{0.1cm}  \\ 
Institut de Math\'ematiques de Toulouse et CNRS  (UMR 5219)$^{3}$ \\ Universit\'e de Toulouse  }
\date{\today}
\begin{document}

\maketitle

\thispagestyle{empty}

\begin{abstract}
This paper is concerned by the study of barycenters for random probability measures  in the Wasserstein space. Using a duality argument, we give a precise characterization of the population barycenter for various parametric classes of random probability measures with compact support. In particular, we make a connection between averaging in the  Wasserstein space as introduced in Agueh and Carlier \cite{MR2801182}, and taking the expectation of optimal transport maps with respect to a fixed reference measure. We also discuss the usefulness of this approach in statistics for the analysis of deformable models in signal and image processing. In this setting, the problem of estimating a population barycenter from $n$ independent and  identically distributed random probability  measures is also considered. 
\end{abstract}

\noindent \emph{Keywords:}  Wasserstein space; Empirical and population barycenters; Fréchet mean; Convergence of random variables; Optimal transport; Duality; Curve and image warping; Deformable models.

\noindent\emph{AMS classifications:} Primary 62G05; secondary 49J40.

\hspace{\baselineskip}

\noindent\textbf{Acknowledgements}

\noindent  The authors acknowledge the support of the French Agence Nationale de la Recherche (ANR) under reference ANR-JCJC-SIMI1 DEMOS. We would like to also thank J\'er\^ome Bertrand for fruitful discussions on Wasserstein spaces and the optimal transport problem. We would like to thank the referees and the associate editor for their comments and suggestions. 
\section{Introduction}

In this paper, we consider the problem of characterizing the barycenter of random probability measures on $\RR^{d}$. The set of  Radon probability measures endowed with the 2-Wasserstein distance is not an Euclidean space. Consequently, to define a notion of barycenter for  random probability measures, it is natural to use the notion of Fr\'echet mean \cite{fre} that is an extension of the usual Euclidean barycenter to non-linear spaces endowed with non-Euclidean metrics. If $\bY$ denotes a random variable with distribution $\P$  taking its value in a metric space $(\MM,d_{\MM})$, then a Fréchet mean (not necessarily unique) of the distribution $\P$  is a point $m^{\ast} \in \MM$ that is a global minimum (if any) of the functional 
$$
J(m) = \frac{1}{2} \int_{\MM} d^{2}_{\MM}(m,y) d\P(y) \quad \mbox{ i.e. }  \quad  m^{\ast} \in \argmin_{m \in \MM} J(m).
$$
In this paper, a Fréchet mean of a random variable $\bY$ with distribution $\P$ will be also called a barycenter. An  empirical Fréchet mean of an   independent and  identically distributed (iid) sample $\bY_{1},\ldots,\bY_{n}$ of distribution $\P$ is
$$
\bar{\bY}_{n} \in \argmin_{m \in \MM} \frac{1}{n} \sum_{j=1}^{n}  \frac{1}{2} d^{2}_{\MM}(m,\bY_{j}).
$$
 For random variables belonging to nonlinear metric spaces, a well-known example is the computation of  the mean of a set of planar shapes in the Kendall's shape space \cite{kendall} that leads to the  Procrustean means studied in  \cite{MR1108330}. Many properties of the Fr\'echet mean in finite dimensional Riemannian manifolds (such as consistency and uniqueness) have been investigated in \cite{Afsari, MR2914758,batach1,batach2,MR2816349}. For random variables taking their value in metric spaces of nonpositive curvature (NPC), a detailed study of various properties of their barycenter  can be found in  \cite{MR2039961}.  However, there is not so much work on Fr\'echet means in infinite dimensional metric spaces that do not satisfy the global NPC property as defined in  \cite{MR2039961}. 
 
 \subsection{A parametric class of random probability measures}
 
 Let $\Omega  = \overline{B(0,r)} \subset \RR^{d}$ be the closed ball centered at zero of a given radius $r > 0$. In this paper, we consider the case where $\bY = \bmu$ is a random probability measure whose support is  included in $\Omega$. The support of the measure $\bmu$ is understood as the smallest closed set of $\bmu$-mass equal to 1.
 
 Let us now define  a specific parametric class of random probability measures. Let $\MM_{+}(\Omega)$ be  the set of  Radon probability measures with support included in $\Omega$ endowed with  the 2-Wasserstein distance $d_{W_{2}}$ between two probability measures. Let $\Phi : (\RR^{p}, \BB(\RR^{p})) \to (\MM_{+}(\Omega), \BB \left( \MM_{+}(\Omega) \right)$ be a measurable mapping, where $\BB(\RR^{p})$ is the Borel $\sigma$-algebra of $\RR^{p}$ and $\BB \left( \MM_{+}(\Omega) \right)$ is the  Borel $\sigma$-algebra  generated by the topology induced by the distance $d_{W_{2}}$. For a measurable subset  $\Theta$ of $\RR^{p}$ (with $p \geq 1$), we consider the parametric set of probability measures $\{ \mu_{\theta} = \Phi(\theta), \; \theta \in \Theta \}$. Futhermore, we assume that, for any $\theta \in \Theta$, the measure $\mu_{\theta} = \Phi(\theta) \in \MM_{+}(\Omega)$ admits a density with respect to the Lebesgue measure on $\RR^{d}$. It follows that if $\btheta \in \RR^{p}$ is a random vector with distribution $\P_{\Theta}$ admitting a density $g : \Theta \to \RR_{+}$, then $\mu_{\btheta} = \Phi(\btheta)$ is a random probability measure with distribution $\P_{g}$ on $(\MM_{+}(\Omega), \BB \left( \MM_{+}(\Omega) \right)$ that is the push-forward measure defined by
\begin{equation} \label{eq:Pg}
\P_{g}(B) = \P_{\Theta}(\Phi^{-1}(B)), \mbox{ for any } B \in  \BB \left( \MM_{+}(\Omega) \right).
\end{equation}
In this paper, we propose to study some properties of the barycenter $\mu^{\ast}$ of $\mu_{\btheta}$ defined as the following Fréchet mean
\begin{eqnarray}
\mu^{\ast} & = &  \argmin_{\nu \in \MM_{+}(\Omega)}  J(\nu), \label{eq:pbgen} 
\end{eqnarray}
where
$$
J(\nu) = \int_{ \MM_{+}(\Omega)}  \frac{1}{2} d^{2}_{W_{2}}(\nu, \mu )  d \P_{g}(\mu) = \EE \left( \frac{1}{2}  d^{2}_{W_{2}}(\nu, \mu_{\btheta}) \right) = \int_{\Theta}  \frac{1}{2} d^{2}_{W_{2}}(\nu, \mu_{\theta}) g(\theta) d \theta, \; \nu \in  \MM_{+}(\Omega). 
$$
 If it exists and is unique,  the measure $\mu^{\ast}$ will be referred to as the population barycenter of the random measure $\mu_{\btheta}$ with distribution $\P_{g}$.
 
 The empirical counterpart  of $\mu^{\ast}$ is the barycenter $\bar{\bmu}_{n}$ defined as
\begin{eqnarray}
\bar{\bmu}_{n} & = &  \argmin_{\nu \in \MM_{+}(\Omega) }  \frac{1}{n} \sum_{i=1}^{n} \frac{1}{2} d^{2}_{W_{2}}(\nu, \mu_{\btheta_{i}} ),  \label{eq:empbary} 
\end{eqnarray}
where $\btheta_{1},\ldots,\btheta_{n}$ are iid random  vectors in $\Theta$ with density $g$.  

\subsection{Main results of the paper}

The main contribution of this paper is to give an explicit characterization of the population barycenter $\mu^{\ast}$ that can be (informally)  stated as follows:  let $\mu_{0} \in \MM_{+}(\Omega)$ be a fixed reference measure that is absolutely continuous with respect to the Lebesgue measure on $\RR^d$, and let $T_{\btheta} : \Omega \to \Omega$ be the optimal mapping that transports $\mu_{0}$ onto $\mu_{\btheta}$. This mapping is such that $ \mu_{\btheta} = T_{\btheta} \# \mu_{0}$, where  $T_{\btheta} \# \mu_{0}$ denotes the push-forward of the measure  $\mu_{0}$, and its satisfies
$$
d^{2}_{W_{2}}(\mu_{0}, \mu_{\btheta} ) = \int_{\Omega} |T_{\btheta}(x) - x |^{2} d\mu_{0}(x).
$$
Thanks to the well-known Brenier's theorem \cite{Brenier91} the mapping $T_{\btheta}$ is uniquely defined $\mu_{0}$-almost everywhere on the support $\Omega_{0} \subset \Omega$ of the reference measure $\mu_{0}$. 

A first result of this paper is that if $\EE(  T_{\btheta}(x) ) = x$ for all $x \in \Omega_{0}$, then $\mu_{0}$  is equal to  the population barycenter $\mu^{\ast}$, and one has that
$$
\inf_{\nu \in \MM_{+}(\Omega)}  J(\nu) =  \frac{1}{2}  \int_{\Omega}  \EE \left(  | T_{\btheta}(x)   - x |^{2} \right) d \mu^{\ast} (x) =  \frac{1}{2}  \int_{\Omega}  \int_{\Theta} \left(  | T_{\theta}(x)   - x |^{2} \right) g(\theta) d \theta d \mu^{\ast} (x).
$$
This property is already known for empirical barycenters from the arguments in Remark 3.9 in \cite{MR2801182}.
This characterization of barycenters can also be written in the form
$
\mu^{\ast} =  \bar{T} \# \mu_{0},
$
where $\bar{T} \ = \EE(  T_{\btheta} ) $ is the mapping defined by $\bar{T}(x) =  \EE(  T_{\btheta}(x) )$, for all $x \in \Omega_{0}$. This suggests that averaging in the  Wasserstein space may amount to take the expectation (in the usual sense) of the optimal transport map $T_{\btheta}$ with respect to a fixed reference measure $\mu_{0}$. However, this result is generally not true if $\EE(  T_{\btheta} )\neq I$ 
, where $I = \Omega_{0} \to \Omega_{0}$ denotes the identity mapping. Nevertheless, we  propose to consider sufficient conditions (beyond the case $\EE(  T_{\btheta} ) =  I$) ensuring that 
\begin{equation} \label{eq:expectmapping}
\mu^{\ast} =   \bar{T}   \# \mu_{0}. 
\end{equation}
 To the best of our knowledge, such a result on the population barycenter in the  2-Wasserstein space is new. A similar result has been established  in \cite{MR3338645} for the empirical barycenter, namely that
\begin{equation} \label{eq:empmapping}
\bar{\bmu}_{n} =\left( \frac{1}{n} \sum_{i=1}^{n}  T_{\btheta_{i}} \right)  \# \mu_{0},
\end{equation}
under specific assumptions on the optimal maps $ T_{\btheta_{i}}$ and the reference measure $\mu_{0}$ such that $ \mu_{\btheta_{j}} = T_{\btheta_{j}} \# \mu_{0}$ for $i=1,\ldots,n$.  The validity of equation \eqref{eq:empmapping} stated in \cite{MR3338645}  is restricted to a specific class of optimal transport maps  that is assumed to be admissible in the following sense (see Definition 4.2 in \cite{MR3338645}): there exists $i_{0}$ such that $ T_{\btheta_{i_{0}}} = I$, for each $1 \leq i \leq n$, $T_{\btheta_{i}}$ is a one-to-one mapping,  and the composition $T_{\btheta_{i}} \circ T_{\btheta_{j}}^{-1}$ of two maps in this class remains an optimal one for all $1 \leq i,j \leq n$. 
The extension of the results in \cite{MR3338645}  to the population barycenter has not been considered so far. In this paper, when $\bar{T}  = \EE(  T_{\btheta} ) \neq I$, we do not assume that the collection of maps  $(T_{\theta})_{\theta \in \Theta}$ is an admissible class to prove that $\mu^{\ast} =  \bar{T}   \# \mu_{0}$, in the sense that it is not required that $T_{\theta} \circ T_{\theta'}^{-1}$ is an optimal map for any  $\theta, \theta' \neq \Theta$. 
Indeed,  to prove equation \eqref{eq:expectmapping}, we mainly use, in this work,  the assumption that $T_{\theta} \circ \bar{T}^{-1}$ is an optimal map for all $\theta\in \Theta$. For some deformable models of signals and images, it will be shown that such an assumption may be weaker than the notion of admissible maps introduced in   \cite{MR3338645}.

Equation \eqref{eq:expectmapping}  is not difficult to prove in a one-dimensional setting ($d=1$) i.e.\ for random measures supported on the real line. The main issue is then to extend this result to higher dimensions $d \geq 2$. To this end, we consider a dual formulation of the optimization problem \eqref{eq:pbgen} that allows a precise characterization of some properties of the population barycenter that are used to prove equation  \eqref{eq:expectmapping}.  These results are based on an adaptation of the duality arguments developed in \cite{MR2801182} for the characterization of an empirical barycenter.  Therefore, our approach is very much  connected with the theory of optimal mass transport, and with the characterization of the Monge-Kantorovich problem via arguments from convex analysis and duality, see \cite{villani2003tot} for further details on this topic.

Another contribution of this paper is to discuss the usefulness of barycenters in the Wasserstein space for the statistical analysis of deformable models in signal and image processing. Statistical deformable models are widely used for the analysis of a sequence of random signals or images showing a significant amount of geometric variability in time or space. We refer to \cite{ABGMR13} for a detailed review of deformable models. In such settings, it is of fundamental interest to propose a consistent notion of  averaging for  signals or images sampled from such models. In this paper, we study random measures $\mu_{\btheta}$ whose associated densities $q_{\btheta}$ satisfy the following semi-parametric deformable model: 
\begin{equation}
q_{\btheta}(x)  = \left| \det\left( D \varphi_{\btheta}^{-1} \right) (x) \right|  q_{0}\left(  \varphi_{\btheta}^{-1}(x) \right) \; x \in \Omega,   \label{eq:modeldeform}
\end{equation}
where $\varphi_{\btheta} : \Omega \to \Omega$  is a random parametric diffeomorphism, $\det\left( D \varphi_{\btheta}^{-1} \right) (x)$ denotes the determinant of its Jacobian matrix at point $x$, and $q_{0}$ is a fixed reference density with support $\Omega_{0} \subset \Omega$. In model \eqref{eq:modeldeform}, it  seems natural to define the ``average of  $q_{\btheta}$'' as the density $\bar{q}$ given by
$$
\bar{q}(x)  = \left| \det\left( D \bar{\varphi}^{-1} \right) (x) \right|  q_{0}\left( \bar{\varphi}^{-1}(x) \right) \; x \in \Omega,  
$$
 where $\bar{\varphi} : \Omega_{0} \to \Omega$ is defined by $\bar{\varphi}(x) = \EE \left(  \varphi_{\btheta}(x)  \right) = \int_{\Theta}  \varphi_{\theta}(x)  g(\theta) d \theta, \; x \in \Omega_{0}$. In this paper, a novel result  is to propose  sufficient conditions on the random diffeomorphism $\varphi_{\btheta}$ which ensure that the measure $\bar{\mu}$ with density $\bar{q}$ corresponds to the population barycenter in the Wasserstein space of the random measure $\mu_{\btheta}$.

Finally, we also study the consistency of the empirical barycenter  $\bar{\bmu}_{n}$ to its population counterpart $\mu^{\ast}$ as the number $n$ of measures tends to infinity.

\subsection{Related results in the literature} \label{sec:litt}

A similar notion (to the one in this paper) of a population barycenter and its connection  to optimal transportation with infinitely many marginals have  been  studied in \cite{MR3004954}.  In particular, a similar class of parametric random probability measures, where the parameter set $\Theta$ is one-dimensional, is also considered in \cite{MR3004954} for the purpose of studying the existence and uniqueness of $\mu^{\ast}$. Some generalization of these notions for probability measures defined on a Riemannian manifold, equipped with the Wasserstein metric, have been recently proposed in \cite{legouic:hal-01163262} and \cite{KimPass}. A detailed characterization of empirical barycenters, in a broader setting, in terms of existence, uniqueness and regularity, together with its link to the multi-marginal problem  in optimal transport can be found in \cite{MR2801182}.

 Our work extends some of the results in \cite{MR2801182} on the Wasserstein barycenter of a finite set of measures to the case of a population barycenter for a general parametric distribution $\P_{g}$ on the set of probability measures $\MM_{+}(\Omega)$. In particular, we extend the dual characterization of empirical Wasserstein barycenters as proposed in \cite{MR2801182} to obtain a dual formation of the optimisation problem \eqref{eq:pbgen}.  The results in this paper are also connected to the recent work \cite{fixed-point} which proposes an iterative algorithm to compute an empirical Wasserstein barycenter which consists in iterating the following two steps:
\begin{description}
\item[-] choice of a reference measure onto which the observed measures are mapped,
\item[-] pusforward of the reference measure by the average of these maps, 
\end{description}
until obtaining a reference measure which is a Wasserstein barycenter. In this paper, we somewhat discuss conditions for which such an algorithm  converges in one iteration to obtain a population barycenter.

In the literature on signal and image processing, there exists various  applications of the notion of an empirical barycenter in the Wasserstein space. For example, it has been successfully used  for texture analysis in image processing \cite{2013-Bonneel-barycenter,rabin-ssvm-11}. There also exists a growing interest on the development of fast algorithms for the computation of empirical barycenters with various applications in image processing \cite{2014-Benamou,cuturi-14}. The theory of optimal transport for image warping has also been shown to be usefull tool, see e.g.\ \cite{MR2058553, Haker04optimalmass} and references therein.  Some properties of the empirical barycenter   in the 2-Wasserstein space of random measures satisfying a deformable model similar to \eqref{eq:modeldeform} have also been studied in \cite{MR3338645}.  To study the registration problem of distributions,  the use of semi-parametric models of densities similar to \eqref{eq:modeldeform} has also been considered in \cite{ACLL15}. 

The main contribution of this paper, with respect to existing results in the literature, is to show the benefits of considering the dual formulation  of the (primal) problem \eqref{eq:pbgen} to characterize the population barycenter in the 2-Wasserstein space for a large class of deformable models of measures/densities. To the best of our knowledge, the characterization of a population barycenter in deformable models throughout such arguments is novel. 

\subsection{Organisation of the paper}

The paper is then organized as follows. In Section \ref{sec:gen}, we introduce some definitions and notation for the framework of the paper, and we discuss  the existence and uniqueness of the population barycenter. In Section \ref{sec:realline}, we characterize the population barycenter in the one-dimensional case, i.e.\ for random measures supported on $\Omega \subset \RR$. In Section \ref{sec:dual}, we study a dual formulation of the optimization problem \eqref{eq:pbgen}.  In Section \ref{sec:main}, we prove the main result of the paper, namely equation \eqref{eq:expectmapping} in dimension $d \geq 2$. As an application of the methodology developed in this paper, we discuss, in Section \ref{sec:appli}, the usefulness of barycenters in the Wasserstein  space for the analysis of deformable models in statistics. The consistency of the empirical barycenter is discussed in  Section \ref{sec:empbary}.   In Section \ref{sec:persp} , we  give some perspectives on the extension of this work.

\section{Existence and uniqueness of the population barycenter} \label{sec:gen}
\subsection{Some definitions and notation}

We use bold symbols $\bY, \bmu, \btheta, \ldots$ to denote random objects. The notation $|x|$ is used to denote the usual Euclidean norm of a vector $x \in \RR^{m}$ and the notation $\langle x , y \rangle$ denotes the usual inner product for $x,y \in \RR^{m}$. We recall that $\MM_{+}(\Omega)$ is  the set of  Radon probability measures with support  included in $\Omega = \overline{B(0,r)}$, and that the following assumption is made throughout the paper.


\begin{ass} \label{ass:Phi}
The mapping $\Phi : (\Theta\subset\RR^{p}, \BB(\RR^{p})) \to (\MM_{+}(\Omega), \BB \left( \MM_{+}(\Omega) \right)$ is measurable. Moreover, it is such that, for any $\theta \in \Theta$,  the measure $\mu_{\theta} = \Phi(\theta) \in \MM_{+}(\Omega)$ admits a density with respect to the Lebesgue measure on $\RR^{d}$.
\end{ass}

The squared 2-Wasserstein distance between two probability measures $\mu, \nu \in\MM_{+}(\Omega)$ is 
$$
d^{2}_{W_{2}}(\mu, \nu):=\inf_{  \gamma\in\Pi(\mu,\nu) } \left\{\int_{\Omega\times\Omega}|x-y|^{2}d\gamma(x,y) \right\},
$$
where $\Pi(\mu,\nu)$ is the set of all probability measures on $\Omega \times \Omega$ having $\mu$ and $\nu$ as marginals, see e.g.\ \cite{villani2003tot}.
We recall that $\tilde{\gamma} \in \Pi(\mu,\nu)$ is called an optimal transport plan between $\mu$ and $\nu$ if
$$
d^{2}_{W_{2}}(\mu, \nu) = \int_{\Omega\times\Omega}|x-y|^{2}d\tilde{\gamma}(x,y).
$$
Let $T : \Omega \to \Omega$ be a measurable mapping, and let $\mu \in \MM_{+}(\Omega)$. The push-forward measure $ T  \# \mu $ of $\mu$ through the map $T$ is the measure defined by duality as
$$
\int_{\Omega} f(x) d( T  \# \mu  )(x) =  \int_{\Omega} f( T(x) ) d \mu(x), \; \mbox{ for all continuous and bounded functions } f:\Omega\longrightarrow\RR.
$$
We also recall the following well known result  in optimal transport due to Brenier \cite{Brenier91} (see also \cite{villani2003tot} or Proposition 3.3 in \cite{MR2801182}):
\begin{prop} \label{prop:Brenier}
Let $\mu, \nu \in\MM_{+}(\Omega)$. Then, $\gamma \in \Pi(\mu,\nu)$ is an optimal transport plan between  $\mu$ and $\nu$ if and only if the support of $\gamma$ is included in  the set $\partial \phi$ that is the graph of the subdifferential of a convex and  lower semi-continuous function $\phi$ satisfying
$$
\phi =  \argmin_{\psi \; \in \; \mathcal{C} } \left\{ \int_{\Omega} \psi(x) d \mu(x) + \int_{\Omega} \psi^{\ast}(x) d \nu(x) \right\},
$$
where $\psi^{\ast}(x) = \sup_{y \in \Omega} \left\{ \langle x,y \rangle - \psi(y) \right\} $ is the convex conjugate of $\psi$, and $\mathcal{C}$ denotes the set of proper convex functions $\psi : \Omega \to \RR$ that are lower semi-continuous 

If $\mu$ admits a density with respect to the Lebesgue measure on $\RR^{d}$, then there exists a unique optimal transport plan $\gamma \in \Pi(\mu,\nu)$ that is of the form $\gamma = (id, T) \# \mu$ where $T = \nabla \phi$ (the gradient of  $\phi$)  is called the optimal mapping between $\mu$ and $\nu$. The uniqueness of the transport plan holds in the sense that if $\nabla \phi \# \mu = \nabla \psi \# \mu$, where $\psi : \Omega \to \RR$ is a lower semi-continuous convex function, then  $\nabla \phi = \nabla \psi$ $\mu$-almost everywhere. Moreover, one has that
$$
d^{2}_{W_{2}}(\mu, \nu) = \int_{\Omega} | \nabla \phi(x) - x|^{2} d \mu(x).
$$
\end{prop}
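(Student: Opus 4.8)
The plan is to derive both assertions from Kantorovich duality together with Rockafellar's characterisation of cyclically monotone sets, which is the classical route to Brenier's theorem. First I would rewrite the primal cost: since $|x-y|^{2} = |x|^{2} - 2\langle x,y\rangle + |y|^{2}$ and the two quadratic terms integrate against any $\gamma \in \Pi(\mu,\nu)$ to the (finite) second moments of $\mu$ and $\nu$, minimising $\int_{\Omega\times\Omega}|x-y|^{2} d\gamma$ is equivalent to maximising $\int_{\Omega\times\Omega} \langle x,y\rangle\, d\gamma$. For every $\psi \in \mathcal{C}$ the Fenchel--Young inequality $\langle x,y\rangle \le \psi(x) + \psi^{\ast}(y)$ holds pointwise, so integrating against $\gamma$ and using the marginal constraints yields the weak-duality bound $\int \langle x,y\rangle\, d\gamma \le \int_{\Omega}\psi\, d\mu + \int_{\Omega}\psi^{\ast}\, d\nu$. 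This already identifies the functional minimised in the dual problem over $\mathcal{C}$.

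The crux is to show that there is no duality gap and that the infimum is attained. I would invoke the Kantorovich duality theorem (see \cite{villani2003tot}) to produce a minimiser $\phi \in \mathcal{C}$ for which the two sides agree. Optimality of $\gamma$ then forces equality in Fenchel--Young for $\gamma$-almost every $(x,y)$, i.e.\ $\langle x,y\rangle = \phi(x) + \phi^{\ast}(y)$ on $\supp\gamma$, which is exactly the condition $y \in \partial\phi(x)$; hence $\supp\gamma \subset \partial\phi$. The converse is immediate: if $\supp\gamma \subset \partial\phi$ for such a $\phi$, the same equality holds $\gamma$-a.e., and integrating it against the weak-duality bound shows that $\gamma$ is optimal. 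A more self-contained variant, which I would keep as a fallback to avoid citing duality as a black box, is to prove directly that any optimal $\gamma$ is concentrated on a cyclically monotone set (by a standard swapping/contradiction argument) and then to apply Rockafellar's theorem to manufacture the convex potential $\phi$ with $\supp\gamma \subset \partial\phi$.

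For the second part I would use that a finite convex function on $\Omega \subset \RR^{d}$ is differentiable Lebesgue-almost everywhere. When $\mu$ is absolutely continuous it charges no Lebesgue-null set, so $\phi$ is differentiable $\mu$-a.e.\ and $\partial\phi(x)$ reduces to the singleton $\{\nabla\phi(x)\}$ for $\mu$-almost every $x$. Since $\supp\gamma \subset \partial\phi$, this collapses every optimal plan onto the graph of $\nabla\phi$, giving $\gamma = (id,\nabla\phi)\#\mu$, and the cost formula $d^{2}_{W_{2}}(\mu,\nu) = \int_{\Omega}|\nabla\phi(x) - x|^{2} d\mu(x)$ follows by pushing the cost forward along this map. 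Uniqueness in the stated sense comes from comparing two candidate potentials on their common set of differentiability: if $\nabla\phi\#\mu = \nabla\psi\#\mu$ then both maps are supported in subdifferentials of optimal potentials, which forces $\nabla\phi = \nabla\psi$ $\mu$-a.e. The main obstacle I expect is the no-gap and attainment step --- establishing existence of the dual minimiser in $\mathcal{C}$ and rigorously passing from $\gamma$-a.e.\ equality in Fenchel--Young to the inclusion $\supp\gamma \subset \partial\phi$, which requires care with lower semicontinuity and with the measurability of $\nabla\phi$; the remaining differentiability and push-forward arguments are routine once the potential is in hand.
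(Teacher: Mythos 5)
The paper offers no proof of this proposition: it is explicitly recalled as a well-known result with a citation to Villani's book and to Proposition~3.3 of Agueh--Carlier, so there is nothing internal to compare against. Your sketch is a correct outline of the standard argument from exactly those references (Kantorovich duality plus Fenchel--Young to get $\supp\gamma\subset\partial\phi$, then a.e.\ differentiability of convex functions under absolute continuity of $\mu$ to collapse the plan onto the graph of $\nabla\phi$), and you correctly flag the two points needing care, namely dual attainment and passing from $\gamma$-a.e.\ equality to an inclusion of the closed support.
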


\subsection{About the measurability of  $T_{\theta}$} \label{subsec:T}

 Let $\mu_{0}$ be a fixed reference measure that is absolutely continuous with respect to the Lebesgue measure on $\RR^d$. In the introduction, for any $\theta \in \Theta$ , we have defined $T_{\theta} : \Omega \to \Omega$ as the optimal mapping to transport $\mu_{0}$ onto $\mu_{\theta}$. By Proposition \ref{prop:Brenier}, such a mapping  exists, and it is the  $\mu_{0}$-almost everywhere unique one that can be written as the gradient of a convex function $\phi_{\theta}$. Then, thanks to Assumption \ref{ass:Phi}, it follows, by Theorem 1.1 in \cite{MR2643592}, that there exists a function $(\theta,x) \mapsto T(\theta,x)$ which is measurable with respect to the $\sigma$-algebra $  \BB(\RR^{p})  \otimes \BB \left( \MM_{+}(\Omega) \right)$, and such that for $\P_{\Theta}$-almost everywhere $\theta \in \Theta$,
$$
T(\theta,x) =  T_{\theta}(x), \; \mbox{ for } \mu_{0}\text{-almost everywhere } x \in \Omega.
$$
In particular, the  mapping $(x,\theta) \mapsto T_{\theta}(x)$ is measurable with respect to the completion of $  \BB(\RR^{p})  \otimes \BB \left( \MM_{+}(\Omega) \right)$ with respect to $g(\theta) d\theta d \mu_{0}(x)$ (we recall  the assumption that $d \P_{\Theta}(\theta) =   g(\theta) d\theta$).
 
 \subsection{Existence and uniqueness of $\mu^{\ast}$ }\label{sec:exist}
 
 Let us now prove the existence and uniqueness of the barycenter (i.e.\ the Fréchet mean) of the parametric random measures $\mu_{\btheta}$ with distribution $\P_{g}$, as defined in equation \eqref{eq:Pg}. We recall that this amounts to study the solution (if any) of the optimization problem \eqref{eq:pbgen}.

\begin{rem}[About the existence and finiteness of $J(\nu)$]\label{rem:finite} Let $\nu \in \MM_{+}(\Omega)$. The integral defining $J(\nu)$ will be defined as soon as $\theta \mapsto d^{2}_{W_{2}}(\nu, \mu_{\theta}) g(\theta)$ is a measurable application. The measurability of this mapping follows from Assumption  \ref{ass:Phi}. Moreover, since  $\Omega$ is compact, it follows that, for any $\theta \in \Theta$,  $d^{2}_{W_{2}}(\nu, \mu_{\theta}) \leq 4 \delta^{2}(\Omega)$, where $\delta^2(\Omega) = \sup_{x \in \Omega} \{ |x|^2\} < + \infty$. Therefore, one has that
$
J(\nu) = \frac{1}{2} \int_{\Theta} d^{2}_{W_{2}}(\nu, \mu_{\theta}) g(\theta) d \theta \leq 2 \delta^{2}(\Omega) < + \infty,
$
for any $\nu \in \MM_{+}(\Omega)$.
\end{rem} 
 
 As shown below, using standard arguments, the existence and the uniqueness of  $\mu^{\ast}$ are not difficult to prove. To this end, a key property of the functional $J$ defined in \eqref{eq:pbgen} is the following:
\begin{lem} \label{lem:convex}
Suppose that  Assumption  \ref{ass:Phi} holds. Then, the functional $J : \MM_{+}(\Omega) \to \RR$ is strictly convex in the sense that
\begin{equation}
J(\lambda \mu + (1-\lambda) \nu ) < \lambda J(\mu)  + (1-\lambda) J(\nu), \mbox{ for any } \lambda \in ]0,1[ \mbox{ and } \mu, \nu \in \MM_{+}(\Omega) \mbox{ with } \mu \neq \nu.   \label{eq:convexJ}
\end{equation}
\end{lem}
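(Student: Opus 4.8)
The plan is to establish the pointwise strict convexity of the map $\nu \mapsto \tfrac{1}{2} d_{W_{2}}^{2}(\nu, \mu_{\theta})$ for each fixed $\theta \in \Theta$, and then to deduce \eqref{eq:convexJ} by integrating against $g(\theta)\,d\theta$. Fix $\theta$, fix two measures $\mu \neq \nu$ in $\MM_{+}^{2}(\Omega)$, and set $\rho_{\lambda} = \lambda\mu + (1-\lambda)\nu$ for $\lambda \in \, ]0,1[$. The starting point is the elementary fact that transport plans mix linearly: if $\gamma^{\mu}_{\theta} \in \Pi(\mu_{\theta}, \mu)$ and $\gamma^{\nu}_{\theta} \in \Pi(\mu_{\theta}, \nu)$ are optimal, then $\gamma^{\lambda}_{\theta} := \lambda\, \gamma^{\mu}_{\theta} + (1-\lambda)\,\gamma^{\nu}_{\theta}$ has first marginal $\mu_{\theta}$ and second marginal $\rho_{\lambda}$, so that $\gamma^{\lambda}_{\theta} \in \Pi(\mu_{\theta}, \rho_{\lambda})$ is admissible. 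Using $\gamma^{\lambda}_{\theta}$ as a competitor in the infimum defining $d_{W_{2}}^{2}(\mu_{\theta}, \rho_{\lambda})$ immediately yields the non-strict convexity inequality $d_{W_{2}}^{2}(\rho_{\lambda}, \mu_{\theta}) \le \lambda\, d_{W_{2}}^{2}(\mu, \mu_{\theta}) + (1-\lambda)\, d_{W_{2}}^{2}(\nu, \mu_{\theta})$.

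To upgrade this to a strict inequality I would exploit Assumption \ref{ass:mutheta} through Proposition \ref{prop:Brenier}. Since $\mu_{\theta}$ admits a density, the optimal plans are unique and induced by gradients of convex functions, $\gamma^{\mu}_{\theta} = (id, \nabla\phi^{\mu}_{\theta})\#\mu_{\theta}$ and $\gamma^{\nu}_{\theta} = (id, \nabla\phi^{\nu}_{\theta})\#\mu_{\theta}$, and likewise the optimal plan between $\mu_{\theta}$ and $\rho_{\lambda}$ is unique and of the form $(id, \nabla\phi^{\lambda}_{\theta})\#\mu_{\theta}$. Suppose equality held in the inequality above; then $\gamma^{\lambda}_{\theta}$ would achieve $d_{W_{2}}^{2}(\mu_{\theta}, \rho_{\lambda})$, hence be optimal, and by uniqueness it would coincide with the deterministic plan $(id, \nabla\phi^{\lambda}_{\theta})\#\mu_{\theta}$. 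But the disintegration of $\gamma^{\lambda}_{\theta}$ with respect to its first marginal $\mu_{\theta}$ is $\lambda\,\delta_{\nabla\phi^{\mu}_{\theta}(x)} + (1-\lambda)\,\delta_{\nabla\phi^{\nu}_{\theta}(x)}$, which is a single Dirac mass for $\mu_{\theta}$-almost every $x$ only if $\nabla\phi^{\mu}_{\theta} = \nabla\phi^{\nu}_{\theta}$ $\mu_{\theta}$-almost everywhere. By the uniqueness statement in Proposition \ref{prop:Brenier} this would force $\mu = \nabla\phi^{\mu}_{\theta}\#\mu_{\theta} = \nabla\phi^{\nu}_{\theta}\#\mu_{\theta} = \nu$, contradicting $\mu \neq \nu$. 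Hence the inequality is strict for every $\theta \in \Theta$.

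Finally I would integrate: multiplying the strict pointwise inequality by $\tfrac{1}{2} g(\theta) \ge 0$ and integrating over $\Theta$ gives $J(\rho_{\lambda}) < \lambda J(\mu) + (1-\lambda) J(\nu)$, the strictness surviving the integration because the pointwise inequality is strict for all $\theta$ and $g$ is a density with $\int_{\Theta} g\, d\theta = 1 > 0$; this is exactly \eqref{eq:convexJ}. I expect the only delicate point to be the strictness argument, namely justifying that a genuine convex combination of two distinct Brenier maps cannot be realized as a single deterministic transport map, i.e.\ that optimality of $\gamma^{\lambda}_{\theta}$ collapses the two disintegrations onto a common Dirac. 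Everything else (admissibility of the mixed plan, and the measurability in $\theta$ needed to integrate) is routine.
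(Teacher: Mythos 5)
Your argument is correct. The paper itself does not spell out a proof: it disposes of the lemma in one line by citing Lemma 3.2.1 of \cite{MR3004954}, which asserts precisely the strict convexity of $\nu \mapsto d_{W_2}^2(\nu,\mu_\theta)$ when $\mu_\theta$ is absolutely continuous. What you have written is, in effect, a self-contained proof of that outsourced lemma: linear interpolation of optimal plans gives the non-strict inequality, and the strictness comes from the uniqueness part of Proposition \ref{prop:Brenier} --- an optimal plan from an absolutely continuous source is deterministic, so equality would force the mixed plan $\lambda\,\gamma^{\mu}_{\theta}+(1-\lambda)\,\gamma^{\nu}_{\theta}$ to have Dirac disintegrations, hence $\nabla\phi^{\mu}_{\theta}=\nabla\phi^{\nu}_{\theta}$ $\mu_\theta$-a.e.\ and $\mu=\nu$. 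That is the standard argument and it is sound; the passage from pointwise strictness to strictness of the integral is also handled correctly, since $g\,d\theta$ is a probability measure. The only (cosmetic) points you might tighten are that $J$ should be understood as $[0,+\infty]$-valued unless one adds a moment condition on $\theta\mapsto\mu_\theta$ (the inequality holds in the extended sense regardless), and that the identification of the disintegration relies on its $\mu_\theta$-a.e.\ uniqueness --- both routine. Net effect: you buy self-containedness at the cost of a few lines, whereas the paper buys brevity at the cost of an external reference.
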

\begin{proof} 
Inequality \eqref{eq:convexJ} follows immediately from the assumption that, for any $\theta \in \Theta$, the measure $\mu_{\theta}$ admits a density with respect to the Lebesgue measure on $\RR^{d}$, and from the use of Theorem 2.9 in \cite{MR2814414} (for similar results  to those in  \cite{MR2814414} on the strict convexity, one can read Lemma 3.2.1 in \cite{MR3004954}). 
\end{proof}
Hence, thanks to the strict convexity of $J$,   if a barycenter $\mu^{\ast}$ exists, then it is necessarily unique. The existence of $\mu^{\ast}$ is then proved in the next proposition.

\begin{prop}  \label{prop:exist} Under  Assumption  \ref{ass:Phi}, the optimization problem   \eqref{eq:pbgen}  admits a unique minimizer. 
\end{prop}

\begin{proof}

 \noindent Let $\nu^{n}$ be a minimizing sequence of the optimization problem  \eqref{eq:pbgen}. Since $\Omega$ is compact, the sequence $\int_{\Omega} |x|^{2} d\nu^{n}(x)$ is uniformly bounded by $\delta^{2}(\Omega)$. Hence, by Chebyshev's inequality, the sequence $\nu_{n}$ is tight and by Prokhorov's Theorem there exists a (non relabeled) subsequence that weakly converges to some $\mu^{\ast} \in \MM_{+}(\Omega)$. 
Therefore,
$ \frac{1}{2} d^{2}_{W_{2}}(\mu_{\theta},\mu^{\ast}) \leq   \liminf_{n \to + \infty}  \frac{1}{2} d^{2}_{W_{2}}(\mu_{\theta},\nu^{n}) $, 
and thus, by Fatou's Lemma
$$
\int_{\Theta} \frac{1}{2} d^{2}_{W_{2}}(\mu_{\theta},\mu^{\ast}) g(\theta) d \theta \leq  \int_{\Theta}  \liminf_{n \to + \infty}  \frac{1}{2} d^{2}_{W_{2}}(\mu_{\theta},\nu^{n}) g(\theta)   d \theta   \leq  \liminf_{n \to + \infty}    \int_{\Theta} \frac{1}{2} d^{2}_{W_{2}}(\mu_{\theta},\nu^{n}) g(\theta) d \theta.
$$
Therefore, $J(\mu^{\ast}) =  \inf_{\nu \in \MM_{+}(\Omega)}  \frac{1}{2} \int_{\Theta} d^{2}_{W_{2}}(\nu, \mu_{\theta}) g(\theta) d \theta$, which proves that the optimization problem  \eqref{eq:pbgen} admits a minimizer. 

By the strict convexity of  the functional $J$ (as stated in Lemma \ref{lem:convex}), it follows that the barycenter of $\mu_{\btheta}$   is  necessarily unique.
\end{proof}


\section{Barycenter for measures supported  on the real line} \label{sec:realline}

In this section, we study some properties of the population barycenter $\mu^{\ast}$ of random measures supported on the real line i.e.\  we consider the case $d=1$ where $\Omega = [-r,r] \subset \RR$. In this setting, our characterization of $\mu^{\ast}$ will follow from the well known fact that if $\mu$ and $\nu$ are measures belonging to $\MM_{+}(\Omega)$ then
\begin{equation}
d^{2}_{W_{2}}(\nu, \mu )=\int_{0}^{1}\left|F_{\nu}^{-1}(x)-F_{\mu}^{-1}(x)\right|^{2}dx,  \label{eq:quantW2}
\end{equation}
where $F_{\nu}^{-1}$ (resp. $F_{\mu}^{-1}$) is the quantile function of $\nu$ (resp. $\mu$). This explicit expression for the Wasserstein distance allows a simple characterization of the barycenter of random measures. In particular, we prove that equation \eqref{eq:expectmapping} always holds for $d=1$. This result means that, in dimension 1, computing a barycenter in the Wasserstein space amounts to take the expectation (in the usual sense) of the optimal mapping to transport  a fixed (non-random) reference measure $\mu_{0}$ onto $\mu_{\btheta}$.

\begin{theo}\label{theo:1D}
Let  $\mu_{0}$ be any  fixed measure in $\MM_{+}(\Omega)$ that is absolutely continuous with respect to the Lebesgue measure, and whose supported is denoted by $\Omega_{0}$. 
Suppose that  Assumption  \ref{ass:Phi} hold. Let $\mu_{\btheta} = \Phi(\btheta)$ be a random measure where $\btheta$ is a random vector in $\Theta$ with density $g$. Let $T_{\btheta}$  be the random optimal  mapping between $\mu_{0}$ and $\mu_{\btheta}$ given by $T_{\btheta}(x)=F^{-1}_{\mu_{\btheta}}\left(F_{\mu_{0}}(x)\right)$,  $x\in\Omega_{0}$, where $F^{-1}_{\mu_{\btheta}}$ is the quantile function of $\mu_{\btheta}$, and $F_{\mu_{0}}$ is the cumulative distribution function of $\mu_{0}$.

 Then, the barycenter of $\mu_{\btheta}$ exists,  is unique, and  satisfies:
\begin{equation}\label{eq:mu0}
\mu^{\ast}= \bar{T} \#\mu_{0},
\end{equation}
where $\bar{T} : \Omega_{0} \to \Omega$ denotes the optimal mapping between $\mu_{0}$ and $\mu^{\ast}$ that is  defined by
$$
\bar{T}(x) = \EE\left(T_{\btheta}(x)\right) = \int_{\Theta} T_{\theta}(x) g(\theta) d \theta, \mbox{ for all } x \in \Omega_{0}.
$$
Furthermore, the quantile function of $\mu^{\ast}$ is for $y \in [0,1]$
$$
F^{-1}_{\mu^{\ast}}(y)=\EE\left(F^{-1}_{\mu_{\btheta}}(y)\right) = \int_{\Theta} F^{-1}_{\mu_{\theta}}(y) g(\theta) d \theta.
$$
Thus, the barycenter $\mu^{\ast}$ does not depend on the choice of $\mu_{0}$, and one has that
\begin{equation}\label{eq:var1D}
\inf_{\nu \in \MM_{+}(\Omega)}  J(\nu) =  \frac{1}{2}  \int_{\Omega}  \EE \left(  | \bar{T}_{\btheta}(x)   - x |^{2} \right) d \mu^{\ast} (x) =  \frac{1}{2}  \int_{\Omega}   \int_{\Theta} | \bar{T}_{\theta}(x)   - x |^{2} g(\theta) d \theta d \mu^{\ast} (x) ,
\end{equation}
where $\bar{T}_{\btheta} = T_{\btheta} \circ \bar{T}^{-1}$.
\end{theo}
\begin{proof} 
Let $\nu\in \MM_{+}(\Omega)$ then 
$$
J(\nu) = \int_{\Theta} \frac{1}{2} d^{2}_{W_{2}}(\nu, \mu_{\theta} )g(\theta)d\theta= \frac{1}{2}  \int_{\Theta}\int_{0}^{1}\left|F_{\nu}^{-1}(y)-F_{\mu_{\theta}}^{-1}(y)\right|^{2}dy g(\theta)d\theta.
$$
In the proof, we will repeatedly use Fubini's Theorem to interchange the order of integration in the right-hand size of the above equation. To be valid, Fubini's Theorem requires that the application $(y,\theta)\mapsto\left|F_{\nu}^{-1}(y)-F_{\mu_{\theta}}^{-1}(y)\right|^{2} g(\theta)$ is measurable. This application will be measurable as soon as $(y,\theta)\mapsto F_{\mu_{\theta}}^{-1}(y)$ is measurable. Since, by definition, $F_{\mu_{\theta}}^{-1}(y) = T_{\theta}(F_{\mu_{0}}^{-1}(y))$ for any $y \in [0,1]$, the measurability of $(y,\theta)\mapsto F_{\mu_{\theta}}^{-1}(y)$ is ensured by the continuity of $y \mapsto F_{\mu_{0}}^{-1}(y)$ and the measurability of the mapping $(x,\theta) \mapsto T_{\theta}(x)$ which has been stated in Section \ref{subsec:T}.


Let us now consider the  function $\EE\left(F^{-1}_{\mu_{\btheta}}\right) : [0,1] \to \Omega$ defined by
$$
\EE\left(F^{-1}_{\mu_{\btheta}}\right)(y) := \EE\left(F^{-1}_{\mu_{\btheta}}(y)\right) = \int_{\Theta} F^{-1}_{\mu_{\theta}}(y) g(\theta) d \theta < + \infty
$$
for any $y \in [0,1]$. We will now prove that $\EE\left(F^{-1}_{\mu_{\btheta}}\right)$ is a quantile function. To this end, let $\delta^2(\Omega) = \sup_{x \in \Omega} \{ |x|^2\} = r^2$.  
 Since $F^{-1}_{\mu_{\theta}}$ is a quantile function, it follows that the function $y \mapsto F^{-1}_{\mu_{\theta}}(y)$ is left-continuous on $(0,1)$, for any $\theta \in \Theta$. Hence, since  $|F^{-1}_{\mu_{\theta}}(y)| \leq  \delta(\Omega)$ for any $(\theta,y) \in \Theta \times [0,1]$ it follows, by Lebesgue's dominated convergence theorem, that $y \mapsto \EE\left(F^{-1}_{\mu_{\btheta}}\right)(y)$ is a left-continuous function on $(0,1)$. Since $y \mapsto F^{-1}_{\mu_{\theta}}(y)$ is non-decreasing for any $\theta \in \Theta$, it is also clear that $y \mapsto \EE\left(F^{-1}_{\mu_{\btheta}}\right)(y)$ is a non-decreasing function. Hence, thanks to the property that any left-continuous and non-decreasing function from $[0,1]$ to  $\Omega$ is the quantile of some probability measure belonging to $\MM_{+}(\Omega)$ (see e.g.\ Proposition A.2 in \cite{W1}), one obtains that $\EE\left(F^{-1}_{\mu_{\btheta}} \right)$ is a quantile function.
 
Now, it is clear that   $\EE\left(|F^{-1}_{\mu_{\btheta}}(y)|^2\right)  \leq  \delta^2(\Omega) < + \infty$ for any $y \in [0,1]$. Hence, applying Fubini's Theorem, and the fact that  $\EE\left|a-X\right|^{2}\geq\EE\left|\EE(X)-X\right|^{2}$ for any squared integrable real random variable $X$ and real number $a$, we obtain that
\begin{eqnarray}
\int_{\Theta}d^{2}_{W_{2}}(\nu, \mu_{\theta} )g(\theta)d\theta&=& \int_{0}^{1}\int_{\Theta}\left|F_{\nu}^{-1}(y)-F_{\mu_{\theta}}^{-1}(y)\right|^{2}g(\theta)d\theta dy = \int_{0}^{1} \EE  \left|F_{\nu}^{-1}(y)-F_{\mu_{\btheta}}^{-1}(y)\right|^{2}  dy  \nonumber \\
& \geq& \int_{0}^{1} \EE  \left| \EE\left(F^{-1}_{\mu_{\btheta}}(y)\right)-F_{\mu_{\btheta}}^{-1}(y)\right|^{2}  dy \nonumber \\
&=& \int_{0}^{1}\int_{\Theta}\left|\EE\left(F^{-1}_{\mu_{\btheta}}(y)\right)-F_{\mu_{\theta}}^{-1}(y)\right|^{2}g(\theta)d\theta dy \nonumber \\ 
& = &\int_{\Theta}d^{2}_{W_{2}}(\mu^{\ast}, \mu_{\theta} )g(\theta)d\theta, \label{eq:minJ1D}
\end{eqnarray}
where $\mu^{\ast}$ is the measure  in $\MM_{+}(\Omega)$ with quantile function given by  $F^{-1}_{\mu^{\ast}}=\EE\left(F^{-1}_{\mu_{\btheta}}\right)$. The above inequality shows that $J(\nu)  \geq J(\mu^{\ast}) $ for any $\nu\in \MM_{+}(\Omega)$. Therefore, $\mu^{\ast}$ is a barycenter of the random measure $\mu_{\btheta}$, and  the unicity of $\mu^{\ast}$ follows from the strict convexity of the  functional $J$ as stated in Lemma \ref{lem:convex}.
Finally, let  $\mu_{0}$ be any  fixed measure in $\MM_{+}(\Omega)$  that is absolutely continuous with respect to the Lebesgue measure, and whose support is $\Omega_{0}$. Hence, one has that $ F_{\mu_{0}}\circ F^{-1}_{\mu_{0}}(y) = y$ for any $y \in [0,1]$. Therefore, equation (\ref{eq:mu0}) follows from the  equalities
$$
F^{-1}_{\mu^{\ast}}=\EE\left(F^{-1}_{\mu_{\btheta}}\right)=\EE\left(F^{-1}_{\mu_{\btheta}}\circ F_{\mu_{0}} \right) \circ F^{-1}_{\mu_{0}} =  \EE\left(T_{\btheta}\right) \circ F^{-1}_{\mu_{0}} = \bar{T}\circ F^{-1}_{\mu_{0}}, 
$$
where $\bar{T} =  \EE\left(T_{\btheta}\right) = F^{-1}_{\mu^{\ast}} \circ  F_{\mu_{0}}$. Note that it is clear that $\bar{T} : \Omega_{0} \to \Omega$ is an increasing and continuous function, and thus it is the optimal mapping between $\mu_{0}$ and $\mu^{\ast}$ by Proposition \ref{prop:Brenier}. Finally,  by equation \eqref{eq:minJ1D} and Fubini's Theorem, one has that
\begin{eqnarray*}
\int_{\Theta}d^{2}_{W_{2}}(\mu^{\ast}, \mu_{\theta} )g(\theta)d\theta & = &  \int_{0}^{1}  \EE  \left| F^{-1}_{\mu^{\ast}}(y) -F_{\mu_{\btheta}}^{-1}(y)\right|^{2}  dy =  \EE  \left( \int_{0}^{1}  \left| F^{-1}_{\mu^{\ast}}(y) -F_{\mu_{\btheta}}^{-1}(y)\right|^{2}  dy \right) \\
& = &  \EE  \left( \int_{\Omega}  \left| F_{\mu_{\btheta}}^{-1} \circ F_{\mu^{\ast}}(x) -x\right|^{2}  d \mu^{\ast}(x) \right)  \\
& = &  \EE  \left( \int_{\Omega}  \left| F_{\mu_{\btheta}}^{-1} \circ F_{\mu_{0}} \circ  F_{\mu_{0}}^{-1} \circ F_{\mu^{\ast}}(x) -x\right|^{2}  d \mu^{\ast}(x) \right) \\
& = & \EE  \left( \int_{\Omega}  \left| T_{\btheta} \circ  \bar{T}^{-1}  (x) -x\right|^{2}  d \mu^{\ast}(x) \right) 
\end{eqnarray*}
where the last equality follows from the fact that $T_{\btheta} =F^{-1}_{\mu_{\btheta}}\circ F_{\mu_{0}}$ and $\bar{T}^{-1} =  F_{\mu_{0}}^{-1} \circ F_{\mu^{\ast}}$. Hence, this proves equation \eqref{eq:var1D}, and it completes the proof.  

\end{proof}

To illustrate  Theorem \ref{theo:1D}, we consider   a  simple construction of  random probability measures in the case where $\Omega = [-r,r]$.  Let $\tilde{\mu} \in \MM_{+}^{2}(\Omega)$ admitting the density $\tilde{f}$ with respect to the Lebesgue measure on $\RR$, and cumulative distribution function (cdf) $\tilde{F}$. We assume that the density $\tilde{f}$ is continuous on $\Omega$, and that it is  supported in a sub-interval $\tilde{\Omega}$ of $\Omega$.  Let $\btheta = (\ba,\bb) \in ]0,+\infty[ \times \RR$ be a two dimensional random vector with density $g$, such that $\ba x+ \bb  \in \Omega$ for any $x \in \tilde{\Omega}$. We denote by $\mu_{\btheta}$   the  random probability measure admitting the density
$$
f_{\btheta}(x) = \frac{1}{\ba} \tilde{f} \left( \frac{x-\bb}{\ba}\right), \; x \in \Omega, 
$$
where we have extended $\tilde{f}$ outside $\Omega$ by letting $\tilde{f}(u) = 0$ for $u \notin \Omega$. Thanks to our assumptions, the density $f_{\btheta}$ is  supported in a sub-interval of $\Omega$. Moreover, the cdf and quantile function of $\mu_{\btheta}$ are given by
$$
F_{\mu_{\btheta}}(x) = \tilde{F}\left( \frac{x-\bb}{\ba}\right),  \; x \in \Omega, \quad \mbox{ and }  \quad F_{\mu_{\btheta}}^{-1}(y) = \ba \tilde{F}^{-1}(y) + \bb, \; y \in [0,1]. 
$$
By Theorem \ref{theo:1D}, it follows that the barycenter of $\mu_{\btheta}$ is the probability measure  $\mu^{\ast}$  whose quantile function is given by
$$
F^{-1}_{\mu^{\ast}}(y) = \EE(\ba) \tilde{F}^{-1}(y) + \EE(\bb) , \; y \in [0,1]. 
$$
Therefore, $\mu^{\ast}$  admits the density
$$
f^{\ast}(x) = \frac{1}{\EE(\ba)} \tilde{f} \left( \frac{x-\EE(\bb)}{\E(\ba)}\right), x  \in \Omega,
$$
with respect to the Lebesgue measure on $\RR$.  Moreover, if $\mu_{0}$ is any  fixed measure in $\MM_{+}(\Omega)$, that is absolutely continuous with respect to the Lebesgue measure, then
$$
\mu^{\ast}= \bar{T} \#\mu_{0}, \quad \mbox{ where } \quad \bar{T}(x) = \EE(\ba) \tilde{F}^{-1}( F_{\mu_{0}}(x) ) + \EE(\bb), \; x \in \Omega_{0}.
$$

Now, we remark  that extending Theorem \ref{theo:1D} to dimension $d \geq 2$ is not straightforward. Indeed, the two key ingredients in the proof of  Theorem \ref{theo:1D} are the use of the well-known characterization \eqref{eq:quantW2} of the Wasserstein distance  in dimension $d=1$ via the quantile functions, and  the fact that, in dimension $d=1$,  the composition of two optimal maps (which are increasing functions) remains an optimal one (thanks to Proposition \ref{prop:Brenier}). However, the property \eqref{eq:quantW2}  which explicitly relates  the Wasserstein distance $d_{W_{2}}(\nu, \mu_{\theta} )$ to the marginal distributions of $\nu$ and $\mu_{\theta}$  is not valid in higher dimensions. Moreover, the composition of two optimal maps is not necessarily an optimal one in dimension $d \geq 2$. Nevertheless, we show in  Section \ref{sec:main} that  analogs of Theorem  \ref{theo:1D} can still be obtained in dimension $d \geq 2$.


\section{Dual formulation} \label{sec:dual}

In this section, we introduce a dual formulation of problem  \eqref{eq:pbgen} that is inspired by the one proposed in \cite{MR2801182} to study the properties of empirical barycenters.  This dual formulation is then the key property to state the main result of this paper given in Section \ref{sec:main}. Let us recall the optimization problem \eqref{eq:pbgen}  as
\begin{eqnarray}
(\PP) \quad   J_{\PP}:= \inf_{\nu \in \MM_{+}(\Omega)} J(\nu),  \mbox{ where } J(\nu) = \frac{1}{2} \int_{\Theta} d^{2}_{W_{2}}(\nu, \mu_{\theta}) g(\theta) d \theta.  \label{eq:P} 
\end{eqnarray}
Then, let us introduce some definitions. Let $X = C(\Omega,\RR)$ be the space of continuous functions $f : \Omega \to \RR$ equipped with the supremum  norm
$$
\| f \|_{X} = \sup_{x \in \Omega} \left\{ |f(x)| \right\}.
$$
We also denote by $X' = \MM(\Omega)$  the topological dual of $X$, where $\MM(\Omega)$ is  the set of  Radon  measures with support included in $\Omega$. The notation $f^{\Theta} = ( f_{\theta } )_{\theta \in \Theta} \in L^{1}(\Theta, X) $  will denote any mapping
$$
\left\{
\begin{array}{cccc}
f^{\Theta} : & \Theta & \to & X    \\
 & \theta &  \mapsto &   f_{\theta}
\end{array}\right.
$$
such that
  for any $x \in \Omega$
$$
\int_{\Theta} |f_{\theta}(x)| d\theta < + \infty.
$$
Then, following the terminology in \cite{MR2801182}, we introduce the dual optimization problem
\begin{equation}
(\PP^{\ast}) \qquad  J_{\PP^{\ast}}:=\sup \left\{ \int_{\Theta} \int_{\Omega} S_{g(\theta)} f_{\theta}(x) d \mu_{\theta}(x)   d \theta; \; f^{\Theta} \in L^{1}(\Theta, X) \; \mbox{such that} \; \int_{\Theta} f_{\theta}(x)  d \theta = 0, \; \forall x \in \Omega \right\}, \label{eq:Past} 
\end{equation}
where
$$
S_{g(\theta)} f(x) := \inf_{y \in \Omega} \left\{ \frac{g(\theta)}{2} |x-y|^{2} - f(y) \right\}, \forall x \in \Omega \; \mbox{ and } \; f \in X.
$$
Let us also define 
$$
H_{g(\theta)}(f) := - \int_{\Omega} S_{g(\theta)} f(x) d \mu_{\theta}(x),
$$
and the Legendre-Fenchel transform of $H_{g(\theta)}$  for $\nu\in X'$ as
$$
H_{g(\theta)}^{\ast}(\nu) := \sup_{f \in X} \left\{ \int_{\Omega} f(x) d \nu(x) - H_{g(\theta)}(f)  \right\} = \sup_{f \in X} \left\{ \int_{\Omega} f(x) d \nu(x) + \int_{\Omega} S_{g(\theta)} f(x) d \mu_{\theta}(x)  \right\}.
$$
In what follows, we  show that the problems $(\PP)$ and $(\PP^{\ast})$ are dual to each other in the sense that the minimal value $J_{\PP}$ in problem $(\PP)$ is equal to the supremum $J_{\PP^{\ast}}$ in problem  $(\PP^{\ast})$. This duality is the key tool for the proof of Theorem \ref{prop:deform:measure}.

\begin{prop}  \label{prop:minmax} 
Suppose that  Assumption \ref{ass:Phi} is satisfied. Then,
$$J_{\PP} = J_{\PP^{\ast}} .$$
\end{prop}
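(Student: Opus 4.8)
The plan is to recognize $(\PP)$ and $(\PP^{\ast})$ as a Fenchel--Rockafellar dual pair, and to reduce the equality of their values to two ingredients: a pointwise-in-$\theta$ Kantorovich duality, and a conjugation theorem for integral functionals.

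First I would record the fundamental identity, valid for every $\theta \in \Theta$ and every $\nu \in X' = \MM(\Omega)$:
$$
H_{g(\theta)}^{\ast}(\nu) = \tfrac{1}{2}\, g(\theta)\, d^{2}_{W_{2}}(\nu,\mu_{\theta}) \quad \text{if } \nu \in \MM_{+}^{2}(\Omega), \qquad H_{g(\theta)}^{\ast}(\nu) = +\infty \ \text{ otherwise.}
$$
This is exactly the Kantorovich duality (Lemma 2.1 in \cite{MR2801182}) for the cost $\tfrac{g(\theta)}{2}|x-y|^{2}$: the pairs $(f, S_{g(\theta)}f)$ are admissible Kantorovich potentials, so the supremum defining $H_{g(\theta)}^{\ast}(\nu)$ equals the transport cost, the compactness of $\Omega$ (Assumption \ref{ass:compact}) guaranteeing continuous potentials and a bounded continuous cost. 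The blow-up off $\MM_{+}^{2}(\Omega)$ follows by testing $f \mapsto f+c$ (which forces $\nu(\Omega)=1$) and $f \mapsto -t\,h$ with $h \geq 0$, $t \to +\infty$ (which forces $\nu \geq 0$). Consequently $J(\nu) = \int_{\Theta} H_{g(\theta)}^{\ast}(\nu)\, d\theta$, with the convention that this integral is $+\infty$ when $\nu$ is not a probability measure.

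Next I would set up the duality abstractly. On $V = L^{1}(\Theta, X)$ define the bounded linear operator $A : V \to X$, $A f^{\Theta} = \int_{\Theta} f_{\theta}\, d\theta$, the convex functional $\Phi(f^{\Theta}) = \int_{\Theta} H_{g(\theta)}(f_{\theta})\, d\theta$, and the convex indicator $\Psi = \chi_{\{0\}}$ (equal to $0$ at $0 \in X$ and $+\infty$ elsewhere). Since $S_{g(\theta)}$ is $1$-Lipschitz for $\|\cdot\|_{X}$ and each $\mu_{\theta}$ is a probability measure, one has $|\Phi(f^{\Theta}) - \Phi(\tilde f^{\Theta})| \leq \|f^{\Theta} - \tilde f^{\Theta}\|_{L^{1}(\Theta,X)}$, so $\Phi$ is finite and continuous on all of $V$. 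With these choices $\inf_{f^{\Theta}} \{ \Phi(f^{\Theta}) + \Psi(A f^{\Theta}) \} = -J_{\PP^{\ast}}$, and the qualification hypothesis is met at the feasible point $f^{\Theta} = 0$ (where $\Phi$ is continuous and $\Psi(A\cdot 0)=0$). The Fenchel--Rockafellar theorem then gives
$$
-J_{\PP^{\ast}} = \sup_{\nu \in X'} \bigl\{ -\Phi^{\ast}(A^{\ast}\nu) - \Psi^{\ast}(-\nu) \bigr\}.
$$
Here $\Psi^{\ast} \equiv 0$, while $A^{\ast}\nu$ is the functional $f^{\Theta} \mapsto \int_{\Theta} \int_{\Omega} f_{\theta}\, d\nu\, d\theta$, and the key computation is $\Phi^{\ast}(A^{\ast}\nu) = \int_{\Theta} H_{g(\theta)}^{\ast}(\nu)\, d\theta$, i.e.\ the conjugate of the integral functional equals the integral of the conjugates.

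Combining this display with the first step yields $-J_{\PP^{\ast}} = -\inf_{\nu} \int_{\Theta} H_{g(\theta)}^{\ast}(\nu)\, d\theta = -\inf_{\nu \in \MM_{+}^{2}(\Omega)} J(\nu) = -J_{\PP}$, which is the claim. I expect the main obstacle to be the interchange of supremum and integration needed to identify $\Phi^{\ast}(A^{\ast}\nu) = \int_{\Theta} H_{g(\theta)}^{\ast}(\nu)\, d\theta$: this rests on the theory of integral functionals on decomposable spaces (a normal-integrand, measurable-selection argument à la Rockafellar), which requires checking that $\theta \mapsto H_{g(\theta)}$ is a measurable convex normal integrand, itself a consequence of the measurability of $\theta \mapsto \mu_{\theta}$ (from measurability of $\varphi$) and of $g$. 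A secondary technical point is pinning down the precise Banach-space structure on $L^{1}(\Theta, X)$ so that $A$ is well defined with values in $X$ and the qualification condition of the Fenchel--Rockafellar theorem is legitimately verified.
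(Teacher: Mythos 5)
Your proposal is, up to packaging, the paper's own proof: the paper's inf-convolution $H(f)=\inf\{\int_\Theta H_{g(\theta)}(f_\theta)\,d\theta:\ \int_\Theta f_\theta\,d\theta=f\}$ is exactly the marginal (value) function of your constrained problem $\inf\{\Phi(f^{\Theta})+\Psi(Af^{\Theta})\}$, the identity $H^{\ast}(\nu)=\int_\Theta H^{\ast}_{g(\theta)}(\nu)\,d\theta$ that the paper imports from Theorem 1.6 of \cite{MR0288601} is your key computation $\Phi^{\ast}(A^{\ast}\nu)=\int_\Theta H^{\ast}_{g(\theta)}(\nu)\,d\theta$, and the absence of a duality gap, $H^{\ast\ast}(0)=H(0)$, is your Fenchel--Rockafellar conclusion. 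The one point where your justification is looser than the paper's is the constraint qualification: with $\Psi=\chi_{\{0\}}$ (nowhere continuous) and $A$ a genuine operator rather than the identity, continuity of $\Phi$ at a feasible point is not by itself a textbook sufficient condition for strong duality; what makes it work here is that $A$ is surjective with a bounded right inverse, namely $f\mapsto(f(\cdot)\,g(\theta))_{\theta\in\Theta}$, which forces the marginal function $H$ to be bounded above (hence continuous) near $0$ --- and this is precisely the computation the paper carries out when it shows $H(f)\le 1/4$ for $\|f\|_{X}\le 1/4$ using that very selection. Once that selection is made explicit your argument closes and coincides with the paper's, with only the cosmetic difference that the paper also records the easy inequality $J_{\PP}\ge J_{\PP^{\ast}}$ by integrating $S_{g(\theta)}f_{\theta}(x)+f_{\theta}(y)\le\frac{g(\theta)}{2}|x-y|^{2}$ against optimal plans, whereas you obtain it for free from weak Fenchel duality.
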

\begin{proof} 

 \noindent 1. Let us first prove that  $J_{\PP} \geq J_{\PP^{\ast}} .$\\
By definition for any $f^{\Theta} \in L^{1}(\Theta, X)$ such that  $ \forall x\in\Omega, \  \int_{\Theta} f_{\theta}(x) d \theta = 0,$ and  for all $y\in\Omega$ we have
$$
S_{g(\theta)} f_{\theta}(x)+f_{\theta}(y)\leq  \frac{g(\theta)}{2} |x-y|^{2}.
$$
Let $\nu\in \MM_{+}(\Omega)$ and $\gamma_{\theta}\in\Pi(\mu_{\theta},\nu)$ be an optimal transport plan between $\mu_{\theta}$ and $\nu$. By integrating the above inequality with respect to $\gamma_{\theta}$ we obtain
 $$
\int_{\Omega}S_{g(\theta)} f_{\theta}(x)d\mu_{\theta}(x)+\int_{\Omega}f_{\theta}(y)d\nu(y)\leq \int_{\Omega\times\Omega} \frac{g(\theta)}{2} |x-y|^{2}d\gamma_{\theta}(x,y)= \frac{g(\theta)}{2} d_{W_{2}}^{2}(\mu_{\theta},\nu).
$$
Integrating now with respect to $d\theta$ and using Fubini's Theorem we get
 $$
\int_{\Theta}\int_{\Omega}S_{g(\theta)} f_{\theta}(x)d\mu_{\theta}(x)d\theta\leq \int_{\Theta} \frac{g(\theta)}{2} d_{W_{2}}^{2}(\mu_{\theta},\nu)d\theta.
$$
Therefore we deduce that  $J_{\PP} \geq J_{\PP^{\ast}} .$ \\

\noindent 2.  Let us now prove the converse inequality  $J_{\PP} \leq J_{\PP^{\ast}} .$\\
Thanks to the Kantorovich duality formula (see e.g.\ \cite{villani2003tot}, or Lemma 2.1 in \cite{MR2801182}) we have that $H_{g(\theta)}^{*}(\nu) = \frac{1}{2} d^{2}_{W_{2}}(\mu_{\theta},\nu) g(\theta)$ for any $\nu \in \MM_{+}(\Omega)$. Therefore, it follows that
\begin{equation}
J_P=\inf\left\{\int_{\Theta}H_{g(\theta)}^{*}(\nu)d\theta,\ \nu\in X'\right\}=-\left( \int_{\Theta}H_{g(\theta)}^{*}d\theta \right)^{*}(0). \label{eq:JPstar}
\end{equation}
Define the inf-convolution of $\left(H_{g(\theta)}\right)_{\theta\in\Theta}$ by
$$
H(f)  :=\inf\left\{ \int_{\Theta}H_{g(\theta)}(f_{\theta})d\theta;\ f^{\Theta} \in L^{1}(\Theta, X) , \int_{\Theta}f_\theta(x)d\theta=f(x), \forall x\in\Omega\right\},\quad\forall f \in X.
$$ 
We have in the other hand that
$$
J_{\PP^{\ast}}=-H(0).
$$
Using Theorem 1.6 in \cite{MR0288601}, one has that for any $\nu\in \MM_{+}(\Omega)$
$$
H^{\ast}(\nu)  =   \int_{\Theta} H^{\ast}_{g(\theta)}(\nu) d \theta.
$$
Then, thanks to \eqref{eq:JPstar}, it follows that
$$
J_P = -H^{\ast \ast}(0) \geq -H(0) = J_{\PP^{\ast}}.
$$
Let us now prove that $H^{\ast \ast}(0) = H(0)$. Since $H$ is convex it is sufficient to show that $H$ is continuous at 0 for the supremum norm of the space $X$ (see e.g.\ Proposition 4.1 in  \cite{MR1727362}).
 For this purpose, let $f^{\Theta} \in L^{1}(\Theta, X)$ and  remark that it follows from the definition of $H_{g(\theta)}$ that
\begin{eqnarray*}
H_{g(\theta)}(f_{\theta})  & = & \int_{\Omega}  \sup_{y \in \Omega} \left\{  f_{\theta}(y) - \frac{g(\theta)}{2} |x-y|^{2} \right\}  d \mu_{\theta}(x) \\
& \geq &   f_{\theta}(0) - \frac{g(\theta)}{2} \int_{\Omega}   |x|^{2}  d \mu_{\theta}(x),
\end{eqnarray*}
which implies that
$$
H(f) \geq f(0) - \int_{\Theta}  \frac{g(\theta)}{2} \int_{\Omega}   |x|^{2}  d \mu_{\theta}(x) d \theta > - \infty, \; \forall f \in X.
$$
Let $f \in X $ such that $\| f \|_{X} \leq 1/4$ and choose $f^{\Theta} \in L^{1}(\Theta, X)$ defined by $f_{\theta}(x) = f(x) g(\theta)$ for all $\theta \in \Theta$ and $x \in \Omega$. It follows that
\begin{eqnarray*}
H(f) & \leq & \int_{\Theta}H_{g(\theta)}(f(\cdot) g(\theta))d\theta \leq  \int_{\Theta} \int_{\Omega}  \sup_{y \in \Omega} \left\{  \frac{g(\theta)}{4}  - \frac{g(\theta)}{2} |x-y|^{2} \right\}  d \mu_{\theta}(x)  d\theta \\
& \leq & \int_{\Theta}   \int_{\Omega}    \frac{g(\theta)}{4}  d \mu_{\theta}(x)  d\theta =  \frac{1}{4}.
\end{eqnarray*}
Hence, the convex function $H$ never takes the value $- \infty$ and is bounded from above in a neighborhood of 0 in $X$. Therefore, by standard results in convex analysis (see e.g.\ Lemma 2.1 in  \cite{MR1727362}), 
$H$ is continuous at 0, and therefore $H^{\ast \ast}(0) = H(0)$ which completes the proof. 
\end{proof}


\section{An explicit characterization of the population barycenter} \label{sec:main}

In this section, we extend the results of Theorem \ref{theo:1D} to dimension $d \geq 2$. Let $\mu_{\btheta} \in \MM_{+}(\Omega)$ denote the  parametric random  measure with distribution $\P_{g}$, as defined in equation \eqref{eq:Pg}. Let $\mu_{0}$ be a fixed (reference) measure in $\MM_{+}(\Omega)$ admitting a density with respect to the Lebesgue measure on $\RR^{d}$. Then, by Proposition \ref{prop:Brenier}, there exists, for any $\theta \in \Theta$, a unique optimal mapping $T_{\theta} : \Omega \to \Omega$ such that
$
\mu_{\theta} = T_{\theta} \# \mu_{0},
$
where $ T_{\theta} = \nabla \phi_{\theta}$ $\mu_{0}$-almost everywhere, and  $\phi_{\theta} : \Omega \to \RR$ is a lower semi-continuous  convex (l.s.c) function.  Let $\Omega_{0}$ (resp.\ $\Omega_{\theta}$) be the support of $\mu_{0}$ (resp.\ $\mu_{\theta}$).

In  Theorem \ref{prop:deform:measure} below, we  give sufficient conditions on the expectation of  $T_{\btheta}$ which imply that the  barycenter of $\mu_{\btheta}$ is given by $\mu^{\ast} = \EE\left( T_{\btheta} \right) \# \mu_{0}$. This result  means that computing a barycenter in the Wasserstein space amounts to take the expectation (in the usual sense) of the optimal mapping  $T_{\btheta}$ to transport  $\mu_{0}$ onto $\mu_{\btheta}$. 

To state the main result of this section, we first need to introduce the mapping $\bar{T} : \Omega_{0} \to \Omega$ defined for $x \in \Omega_{0}$ by 
$$
\overline{T}(x) = \EE\left(T_{\btheta}(x)\right) = \int_{\Theta} T_{\theta}(x) g(\theta) d \theta.
$$
A key point in what follows is to assume that $\overline{T}$ is a $C^{1}$ diffeomorphism from the interior of $\Omega_{0}$ to the interior of its range $\overline{\Omega} = \overline{T}(\Omega_{0})$. To simplify the presentation, it is always understood that diffeomorphisms are defined on open sets although it will not always be mentioned. 
Then,  for any $\theta \in \Theta$, we introduce the mapping $\bar{T}_{\theta}$ defined for any $x$ in $\overline{\Omega}$ by
\begin{equation}
\bar{T}_{\theta}(x) = T_{\theta} \circ \bar{T}^{-1}(x). \label{eq:barvarphi}
\end{equation}
The next theorem is the main result of the paper.

%

\begin{theo} \label{prop:deform:measure}
Let $\btheta \in \RR^{p}$ be a random vector with a density $g : \Theta \to \RR$  such that $g(\theta) > 0$ for all $\theta \in \Theta$. Let $\mu_{\btheta} = \Phi(\btheta)$ be a parametric random measure  with distribution $\P_{g}$. Let $\mu_{0}$ be a fixed measure in $\MM_{+}(\Omega)$ admitting a density   with respect to the Lebesgue measure on $\RR^{d}$. Suppose that 
Assumption \ref{ass:Phi}  hold.
 If for any   $\theta \in \Theta$  the following asumptions hold
\begin{eqnarray}
(i) & &  T_{\theta}  \mbox{ is  a $C^1$ diffeomorphism from $\Omega_{0}$ to $\Omega_{\theta}$}, \label{cond:diffeo1} \\
(ii) & &   \bar{T}   \mbox{ is  $C^1$ diffeomorphism from $\Omega_{0}$ to $\overline{\Omega}$ } , \label{cond:diffeo2} \\
(iii) & &  \bar{T}_{\theta}(x)   = \nabla \bar{\phi}_{\theta}(x) \mbox{ for all $x \in \overline{\Omega}$, } \label{eq:condadm}
\end{eqnarray}
where $\bar{\phi}_{\theta} : \Omega \to \RR$ is a l.s.c.\ convex function, that is such that, for any $x\in\Omega$, the function  $\theta \mapsto \bar{\phi}_{\theta}(x) $ is integrable with respect to $\P_{\Theta}$ and satisfies the normalization condition
\begin{equation}
\int_{\Theta}  \bar{\phi}_{\theta}(x) g(\theta) d \theta = \frac{1}{2} |x|^2 \mbox{ for all } x \in \Omega. \label{eq:condadm2}
\end{equation}
Then  the population barycenter is the measure $\mu^{\ast} \in \MM_{+}(\Omega)$   given by 
\begin{equation}
\mu^{\ast} = \overline{T}  \# \mu_{0}, \label{eq:baryTbar}
\end{equation}
and  the optimization problem   \eqref{eq:pbgen} satisfies
\begin{equation}
\inf_{\nu \in \MM_{+}(\Omega)}  J(\nu) =     \frac{1}{2} \int_{\Theta} d^{2}_{W_{2}}(\mu^{\ast}, \mu_{\theta}) g(\theta) d \theta  =  \frac{1}{2}  \int_{\Omega}  \EE \left(  | \bar{T}_{\btheta}(x)   - x |^{2} \right) d \mu^{\ast} (x). \label{eq:optiprimal}
\end{equation}
\end{theo}

\begin{proof}
  
Under the assumptions of Theorem \ref{prop:deform:measure}, it follows from Proposition \ref{prop:exist} that the barycenter $\mu^{\ast}$ exists and is unique. 
%

The proof relies on the dual formulation \eqref{eq:Past} of the optimization problem \eqref{eq:pbgen}, and we refer to Section \ref{sec:dual} for further details and notation. To prove the results stated in Theorem \ref{prop:deform:measure}, we will use the dual characterization of the barycenter $\mu^{\ast}$ that is stated in Proposition \ref{prop:minmax}. For this purpose, we need to find a maximizer $f^{\Theta} = (f_{\theta})_{\theta \in \Theta} \in  L^{1}(\Theta, X)$ of the dual problem $(\PP^{\ast})$, see equation \eqref{eq:Past}.  In the proof, we  repeatedly use the fact that $\mu_{\theta} =  \bar{T}_{\theta} \#  \bar{\mu}$ where, by definition, $\bar{\mu} =  \bar{T} \# \mu_{0}$, and the property that  $\bar{T}_{\theta} = T_{\theta} \circ \bar{T}^{-1}$ is a $C^1$ diffeomorphism from $\overline{\Omega}$ to $\Omega_{\theta}$ which follows from Assumptions \eqref{cond:diffeo1} and \eqref{cond:diffeo2}. \\
 
\noindent a) Let us first compute an upper bound of $J_{\PP^{\ast}}$. Let $f^{\Theta} \in L^{1}(\Theta, X)$ be such that $\int_{\Theta} f_{\theta}(x)  d \theta = 0$ for all $x \in \Omega$. By definition of $S_{g(\theta)} f_{\theta}(x)$ one has that
\begin{equation} \label{eq:Sgtheta}
S_{g(\theta)} f_{\theta}(x) \leq  \frac{g(\theta)}{2} |x-y|^{2} - f_{\theta}(y)
\end{equation}
for any $y \in \Omega$. By using inequality \eqref{eq:Sgtheta} with $y =  \bar{T}_{\theta}^{-1}(x)$ for $x \in \Omega_{\theta}$, one obtains that
\begin{eqnarray*}
\int_{\Theta} \int_{\Omega} S_{g(\theta)} f_{\theta}(x) d \mu_{\theta}(x) d \theta & \leq & \int_{\Theta} \int_{\Omega_{\theta}} \left( \frac{g(\theta)}{2} |x- \bar{T}_{\theta}^{-1}(x)|^{2} -  f_{\theta}\left( \bar{T}_{\theta}^{-1}(x)   \right) \right)  d \mu_{\theta}(x)  d \theta \\
& = &  \int_{\Theta} \int_{\overline{\Omega}} \left( \frac{g(\theta)}{2} | \bar{T}_{\theta}(u)-u |^{2} -  f_{\theta} \left( u \right) \right)   d \bar{\mu}(u) d \theta \\
& = &  \int_{\Theta} \int_{\overline{\Omega}} \left( \frac{g(\theta)}{2} |\bar{T}_{\theta}(u) -u |^{2} \right) d \bar{\mu}(u) d \theta
\end{eqnarray*}
Note that to obtain the second inequality above, we have used the change of variable $u =\bar{T}_{\theta}^{-1}(x) $, while the third inequality has been obtained using  the fact that $ \int_{\Theta} f_{\theta} \left( u \right) d \theta = 0$ for any $u \in \Omega$ combined with Fubini's theorem. Thanks to Condition \eqref{eq:condadm}, $\bar{T}_{\theta}$ is the gradient of a convex function, and it is such that  $\mu_{\theta} =  \bar{T}_{\theta} \#  \bar{\mu}$. Hence, by Proposition \ref{prop:Brenier}, one obtains that $\int_{\Omega}   |\bar{T}_{\theta}(u) -u |^{2}   d \bar{\mu}(u)  = d^{2}_{W_{2}}(\bar{\mu}, \mu_{\theta}) $ which implies that
\begin{eqnarray*}
J(\bar{\mu}) & = &   \int_{\Theta}  \frac{1}{2} d^{2}_{W_{2}}(\bar{\mu}, \mu_{\theta}) g(\theta) d \theta,  \\
& = &  \int_{\Theta} \int_{\Omega} \left( \frac{g(\theta)}{2} |\bar{T}_{\theta}(u) -u |^{2} \right) d \bar{\mu}(u) d \theta =   \frac{1}{2}  \int_{\Omega}  \EE \left(  |\bar{T}_{\btheta}(u)- u |^{2} \right) d \bar{\mu}(u),
\end{eqnarray*}
where the last equality above is a consequence of Fubini's Theorem which follows from the measurability of the mapping $(u,\theta) \mapsto \bar{T}_{\theta}(u)$ that has been stated in Section \ref{subsec:T}.

Therefore, we have shown that
$$
\int_{\Theta} \int_{\Omega} S_{g(\theta)} f_{\theta}(x) d \mu_{\theta}(x) d \theta  \leq J(\bar{\mu}),
$$
for any $f^{\Theta} \in L^{1}(\Theta, X)$  such that $\int_{\Theta} f_{\theta}(x)  d \theta = 0$ for all $x \in \Omega$. This inequality  implies that
\begin{equation} \label{eq:boundJPast}
J_{\PP^{\ast}} \leq J(\bar{\mu}) = \frac{1}{2} \int_{\Omega}  \EE \left(  |\bar{T}_{\btheta}(u)- u |^{2} \right) d \bar{\mu}(u),
\end{equation}
where $J_{\PP^{\ast}}$ denotes the maximal value of the dual problem \eqref{eq:Past}.\\

\noindent b) Let us recall that we have assumed that $g(\theta) > 0$ for any $\theta \in \Theta$. Now, for any $\theta \in \Theta$, we define the function
\begin{equation}
f_{\theta}(x) = -g(\theta)   \left( \bar{\phi}_{\theta}(x)  - \frac{1}{2} |x|^2 \right), \; x \in \Omega, \label{eq:solutiondual}
\end{equation}
where $\bar{\phi}_{\theta}$ is the l.s.c.\ convex function introduced in Condition \eqref{eq:condadm2}.
First,  Assumption \eqref{eq:condadm} ensures that $f^{\Theta} = \left( f_{\theta} \right)_{\theta \in \Theta}$ belongs to $L^{1}(\Theta, X) $. Moreover, by  Condition \eqref{eq:condadm2}, one has  that $ \int_{\Theta} f_{\theta}(x)  d \theta  = 0 $ for all $x \in \Omega$.

Now,  for a given $\theta \in \Theta$, we  need to compute the value of $S_{g(\theta)} f_{\theta}(x)$ for any $x \in \Omega_{\theta}$. To this end, we introduce, for any $x \in \Omega_{\theta}$,  the function $F  : \Omega \to \RR$ defined by
$$
F(y)   = \frac{g(\theta)}{2} |x-y|^{2} - f_{\theta}(y)   = \frac{g(\theta)}{2} |x-y|^{2} +g(\theta)   \left( \bar{\phi}_{\theta}(y)  - \frac{1}{2} |y|^2 \right), \; y \in \Omega.
$$
Note that $F(y)   = \frac{g(\theta)}{2} |x|^{2} - g(\theta) \langle x , y \rangle  + g(\theta) \bar{\phi}_{\theta}(y)$. Hence,  $F$ is  a convex function, since, by assumption, $\bar{\phi}_{\theta}$ is convex.
Searching for some $y \in \overline{\Omega}$, where the gradient of $F$ vanishes, leads to the equation
\begin{eqnarray*}
0 & = & -g(\theta) (x-y) + g(\theta) \left( \nabla \bar{\phi}_{\theta}(y)  -  y  \right) = -g(\theta) (x-y) +g(\theta)  \left(  \bar{T}_{\theta}(y)  -  y  \right),
\end{eqnarray*} %
Since $g(\theta) > 0$, it follows that the convex function $y \mapsto F(y)$ has a minimum at $y =\bar{T}_{\theta}^{-1}(x) \in \overline{\Omega}$. Therefore,
\begin{eqnarray}
S_{g(\theta)} f_{\theta}(x) & = &   \frac{g(\theta)}{2} |x-\bar{T}_{\theta}^{-1}(x) |^{2} +g(\theta)   \left( \bar{\phi}_{\theta}(\bar{T}_{\theta}^{-1}(x) )  - \frac{1}{2} |\bar{T}_{\theta}^{-1}(x)|^2 \right),   \label{eq:Sthetabar}
\end{eqnarray}
for any $x \in \Omega_{\theta}$.

Let us introduce the notation $J^{\ast}\left( f^{\Theta} \right) = \int_{\Theta} \int_{\Omega} S_{g(\theta)} f_{\theta}(x) d \mu_{\theta}(x)   d \theta  $. By equation \eqref{eq:Sthetabar} and using the change of variable $u =\bar{T}_{\theta}^{-1}(x) $, one obtains that
\begin{eqnarray*}
J^{\ast}\left( f^{\Theta} \right)  & =  & \int_{\Theta} \int_{\Omega} \left(  \frac{g(\theta)}{2} |x-\bar{T}_{\theta}^{-1}(x) |^{2} +g(\theta)   \left( \bar{\phi}_{\theta}(\bar{T}_{\theta}^{-1}(x) )  - \frac{1}{2} |\bar{T}_{\theta}^{-1}(x)|^2 \right) \right) d \mu_{\theta}(x)   d \theta  \\
& = &  \int_{\Theta} \int_{\Omega} \left(  \frac{g(\theta)}{2} |\bar{T}_{\theta}(u)-u |^{2} +g(\theta)   \left( \bar{\phi}_{\theta}(u )  - \frac{1}{2} |u|^2 \right) \right)d \bar{\mu}(u)   d \theta \\
& = &  \int_{\Theta} \int_{\Omega}   \frac{g(\theta)}{2} |\bar{T}_{\theta}(u)-u |^{2}  d \bar{\mu}(u) =   \frac{1}{2}   \int_{\Omega}  \EE \left(  |\bar{T}_{\btheta}(u)- u |^{2} \right) d \bar{\mu}(u) d \theta,
\end{eqnarray*}
where the last equality follows as an application of Fubini's theorem combined with Condition \eqref{eq:condadm2}. Hence, thanks to the upper bound \eqref{eq:boundJPast}, we finally have that
$$
J^{\ast}\left( f^{\Theta} \right) = J_{\PP^{\ast}} = \frac{1}{2} \int_{\Omega}  \EE \left(  |\bar{T}_{\btheta}(u)- u |^{2} \right) d \bar{\mu}(u) = J(\bar{\mu}),
$$
which proves that $f^{\Theta}$ is a maximizer of the dual problem $(\PP^{\ast})$. Moreover, by Proposition \ref{prop:minmax}, one also obtains that that $J(\bar{\mu}) \leq J(\nu)$ for any $\nu \in \MM_{+}(\Omega)$. Hence, $\bar{\mu} = \mu^{\ast}$ is the barycenter of the random measure $\mu_{\btheta}$. This completes  the proof of Theorem \ref{prop:deform:measure}. 
\end{proof}

Theorem \ref{prop:deform:measure} shows that, under appropriate assumptions,  computing the population barycenter in the Wasserstein space of the parametric random measure  $\mu_{\btheta}$  amounts to transport the reference measure  $\mu_{0}$ by the expected amount of deformation measured by $\overline{T}$. We discuss below Assumptions  \eqref{cond:diffeo1}, \eqref{cond:diffeo2} and \eqref{eq:condadm}  stated in Theorem \ref{prop:deform:measure}.

Assumption \eqref{cond:diffeo1}  holds under appropriate smoothness conditions both on the measure $\mu_{\theta}$ and $\mu_{0}$ and their the supports $\Omega_{0}$ and $\Omega_{\theta}$. Such assumptions are difficult to summarize in a general setting. For a detailed review on this issue, we refer to the discussion in \cite{Figalli}. Nevertheless, in the next subsection, we  provide various examples of statistical models for which such an assumption holds.

The next proposition give sufficient conditions  ensuring that Assumption \eqref{cond:diffeo2} holds, namely  the existence of $\bar{T}$ as a $C^{1}$ diffeomorphism.

\begin{prop}\label{prop:hypi} 
Assume that for any $\theta\in\Theta,\ T_{\theta}$ is  $C^1$ diffeomorphism from $\Omega_{0}$ to $\Omega_{\theta}$. Let $\overline{T}(x) = \EE\left(T_{\btheta}(x)\right) = \int_{\Theta} T_{\theta}(x) g(\theta) d \theta , \; x \in \Omega_{0}$. Suppose that $\Omega_{0}$ is a convex set, and assume that for any $x\in\Omega_{0}$ there exists $\epsilon>0$ and an integrable function $K : \Theta \to \RR$ (with respect to $g(\theta) d\theta$) such that for any $y\in B(x,\epsilon)$ one has that
\begin{equation}
\left|\frac{\partial}{\partial x}T_\theta(y)\right|\leq  K(\theta). \label{eq:K}
\end{equation}
Then, $\overline{T}$ is a $C^1$ diffeormorphism from $\Omega_{0}$ to $\overline{\Omega}$.
\end{prop}
\begin{proof}
   By  our remarks in Section \ref{subsec:T}, one has that $(x,\theta) \mapsto T_{\theta}(x)$ is a mesurable mapping with respect to the completion of $  \BB(\RR^{p})  \otimes \BB \left( \MM_{+}(\Omega) \right)$ with respect to $g(\theta) d\theta d \mu_{0}(x)$. Therefore, $x \mapsto \bar{T}(x)$ is also a mesurable mapping. Now, by Assumption \eqref{eq:K}, one can apply the Theorem of differentiability under the integral to obtain that $\overline{T}$ is $C^1$. Moreover, we have that 
   $$
   \nabla \overline{T}(x)=\int_{\Theta} \nabla \overline T_{\theta}(x) g(\theta) d \theta,
   $$
   for any $x \in \Omega_{0}$.    Since $ T_{\theta}$ is an optimal mapping and a  $C^1$ diffeomorphism, it follows that  $ T_{\theta}$ is the gradient of a  strictly convex function, and therefore we get that $\nabla \overline{T}(x)$ is  a positive  definite matrix. Hence, by the local inversion Theorem, we know that, for any $x \in \Omega_{0}$,  $\overline{T}$ is a $C^1$ diffeomorphism in a neighborhood of $x$. It remains now to prove that $x\mapsto\overline{T}(x)$ is injective. Take $x_1$ and $x_2$ two distinct points in $\Omega_{0}$. Since, by assumption, $\Omega_{0}$ is a convex set, the segment $[x_1,x_2]$ is included in $\Omega_{0}$. Now, for each $\theta \in \Theta$, the restriction of $T_\theta$ to the segment $[x_1,x_2]$ remains the derivative of a strictly convex function in dimension one.  Thus, $t \mapsto \overline{T}(t x_1 + (1-t) x_{2}) =  \int_{\Theta} T_{\theta}(t x_1 + (1-t) x_{2}) g(\theta) d \theta$ is a strictly increasing function on $[0,1]$. Hence, it follows $T(x_1) \neq T(x_2)$ and thus   $x\mapsto\overline{T}(x)$ is injective.
 \end{proof}

In the case where $\overline{T}(x) = x$ for all $x \in \Omega_{0}$,  it can be checked that  Assumption  \eqref{eq:condadm} is necessarily satisfied. This corresponds to the situation where the population  barycenter $\mu^{\ast}$ is equal to the reference measure $\mu_{0}$.  Note that when $\overline{T} = I$, it is not required in Theorem  \ref{prop:deform:measure} that the set of optimal maps $(T_{\theta})_{\theta \in \Theta}$ are admissible in the sense of Definition 4.2  in  \cite{MR3338645}. Indeed, according to this definition, a necessary condition for a countable collection of one-to-one maps  $(T_{\theta})_{\theta \in \Theta}$ to be an admissible class is that $T_{\theta} \circ T_{\theta'}^{-1}$ is an optimal mapping (in the sense of Proposition \ref{prop:Brenier}) for any $\theta, \theta' \in  \Theta$. In the case where $\overline{T} = I$,  such an assumption is thus not necessary.

When $\overline{T} \neq I$, Assumption \eqref{eq:condadm}  is not necessarily satisfied since the composition $T_{\theta} \circ \bar{T}^{-1}$ between the   optimal mapping $T_{\theta}$ and the map $\bar{T}^{-1}$    is not always an optimal mapping. In the following section, we describe statistical models to illustrate the usefulness of  the barycenter in the Wasserstein space for data analysis, and we discuss various assumptions on $\overline{T}$ to ensure that Assumption \eqref{eq:condadm} is satisfied. Condition  \eqref{eq:condadm2} is rather an identifiability condition. Indeed, by definition of $\overline{T}$, one always has that $\EE ( \bar{T}_{\btheta}(x) ) = \int_{\Theta}  \bar{T}_{\theta}(x) g(\theta) d \theta   = x$ for all $x \in \Omega$. Hence, among the various convex functions $ \bar{\phi}_{\theta}$ such that $\bar{T}_{\theta}   = \nabla \bar{\phi}_{\theta}$, Condition  \eqref{eq:condadm2} implies to choose the one that ``integrates'' the relation $\int_{\Theta}  \bar{T}_{\theta}(x) g(\theta) d \theta   = x$ without any additional constant term.

Finally, it should be also remarked that Condition \eqref{eq:condadm} is weaker than requiring that $T_{\theta} \circ T_{\theta'}^{-1}$ is an optimal mapping for any $\theta, \theta' \in  \Theta$. Therefore,  even in the case where $\overline{T} \neq I$, the conditions stated in Theorem  \ref{prop:deform:measure}  to characterize a population barycenter by averaging optimal transport maps are weaker than those given in  \cite{MR3338645}, since we do not require that the random optimal maps $(T_{\theta})_{\theta \in \Theta}$  belong to an admissible class of mappings (in the sense of Definition 4.2 in \cite{MR3338645}). Moreover, we recall that the study in \cite{MR3338645} is  restricted to the characterization of empirical barycenters and their asymptotic properties. 


\section{An application to deformable models in statistics} \label{sec:appli}

In this section, we propose to discuss some applications of Theorem \ref{prop:deform:measure}. To define probability models where averaging in the Wasserstein space amounts to take the expectation of an optimal transport map,  we study  statistical models for which the notion of population and empirical barycenters in the 2-Wasserstein space is relevant.

\subsection{General framework} \label{sec:models}

In many applications observations are in the form of a set of $n$ one-dimensional signals or gray-level  images $\bX_{1},\ldots,\bX_{n}$
(e.g.\ in geophysics, biomedical imaging or in signal processing for neurosciences), which can be 
considered as iid random variables belonging to an appropriate space $\FF(\Omega)$ of real-valued functions on a compact domain of $\Omega \subset \RR^{d}$. In many situations the observed curves or images share the same structure. This may lead to the assumption that these observations are random elements which vary around a  reference template. Characterizing and estimating such a template is then of fundamental interest in many applications.

In the presence of geometric variability in time or space in the data, a widely used approach is Grenander's pattern theory \cite{Gre,gremil,MR2176922,TYShape} that models such a variability by the action of a Lie group on an infinite dimensional space of curves or images.   Following the ideas of Grenander's pattern theory, a simple assumption is to consider that the  data $\bX_{1},\ldots,\bX_{n}$ are obtained through the deformation of the same template $h \in \FF(\Omega)$  via the so-called deformable model
\begin{equation}
\bX_{i} = h \circ \bvarphi^{-1}_{i}, \; i=1,\ldots,n, \label{eq:deformmodel}
\end{equation}
where $\bvarphi_{1},\ldots,\bvarphi_{n}$ are iid random variables belonging to the set of smooth diffeomorphisms of $\Omega$.  In signal and image processing, there has been recently a growing interest on the statistical analysis of deformable models (in the presence of additive noise) using either rigid or non-rigid random diffeomorphisms $\bvarphi_{i}$, see e.g.\  \cite{MR2301497, BC11,MR2676894, BGL09,BLV09,MR2369028, MR2662362} and the review proposed in \cite{ABGMR13}. Nevertheless, in a data set of curves or images, one generally observes not only a source of variability in geometry, but also a source of photometric variability (e.g.\ the intensity of a pixel changes from one image to another) that cannot be only captured by a deformation of the domain $\Omega$ via a diffeomorphism as in model \eqref{eq:deformmodel}.
 
It is always possible to transform the data $\bX_{1},\ldots,\bX_{n}$ into a set of $n$ iid random probability densities by computing the random variables
$$
\bq_{i}(x) = \frac{\widetilde{\bX_{i}}(x)}{\int_{\Omega} \widetilde{\bX_{i}}(u)du} , \; x \in \Omega, \mbox{ where } \widetilde{\bX_{i}}(x) = \bX_{i}(x) - \min_{u \in \Omega} \left\{ \bX_{i}(u) \right\}, \; i=1,\ldots,n.
$$
Let $q_{0} \in \FF(\Omega)$ be a probability density function. In this section and as already discussed in the introduction, we consider the following deformable model of densities:
\begin{equation}
\bq_{i}(x)  = \left| \det\left( D \bvarphi^{-1}_{i}  \right) (x) \right|  q_{0}\left(  \bvarphi^{-1}_{i}(x) \right) \; x \in \Omega, \; i=1,\ldots,n, \label{eq:modelwarping}
\end{equation}
where $\det\left( D \bvarphi^{-1}_{i}  \right) (x)$ denotes the determinant of the Jacobian matrix of the random diffeomorphism $ \bvarphi^{-1}_{i} $ at point $x$. If we denote by $\bmu_{1},\ldots,\bmu_{n} \in \MM_{+}(\Omega)$ the random probability measures  with densities $\bq_{1},\ldots,\bq_{n}$, and by $\mu_{0}$ the measure with density $q_{0}$, then \eqref{eq:modelwarping} can also be written as the following deformable model of measures
\begin{equation}
\bmu_{i} =  \bvarphi_{i}  \# \mu_{0}, \; i=1,\ldots,n. \label{eq:modelwarping:measures}
\end{equation}
In model \eqref{eq:modelwarping:measures}, under appropriate assumptions on the random maps $\bvarphi_{1},\ldots,\bvarphi_{n}$, computing the empirical barycenter in the Wasserstein space of the random measures $\bmu_{1},\ldots,\bmu_{n}$ leads to a consistent and meaningful estimator of the reference measure $\mu_{0}$ and thus of the template $q_{0}$. In the rest of this section, we discuss some examples of model \eqref{eq:modelwarping:measures}. The main contribution is to  show how  Theorem \ref{prop:deform:measure} can be used to characterise the population barycenter of random measures satisfying the deformable model \eqref{eq:modelwarping:measures}.

\subsection{A parametric class of diffeomorphisms}

Let $\mu_{0}$ be a measure on $\RR^{d}$ having a density $q_{0}$ (with respect to the Lebesgue measure $dx$ on $\RR^{d}$) whose support is equal to a compact set $\Omega_{0} \subset \RR^{d}$.
We propose to characterise the population barycenter of a random measure $\bmu$ satisfying the deformable model
$
\bmu  =  \bvarphi  \# \mu_{0}, 
$
for a specific class of random diffeomorphisms $\bvarphi : \RR^{d} \to \RR^{d}$. Let $\Ss_{d}^{+}(\RR)$ be the set of positive definite $d \times d$ symmetric matrices  with real entries.  Let 
\begin{equation}
\phi : (\RR^{p}, \BB(\RR^{p})) \to \left(\Ss_{d}^{+}(\RR) \times \RR^{d}, \BB \left( \Ss_{d}^{+}(\RR) \times \RR^{d}  \right) \right) \label{eq:mapphi}
\end{equation}
be a measurable mapping, where $\BB( \Ss_{d}^{+}(\RR) \times \RR^{d} )$ is the Borel $\sigma$-algebra of $\Ss_{d}^{+}(\RR) \times \RR^{d}$. For $\theta \in \RR^{p}$, we will use the notations
$$
\phi(\theta) = \left(A_{\theta}, b_{\theta} \right), \mbox{ with } A_{\theta} \in \Ss_{d}^{+}(\RR), \; b_{\theta} \in \RR^{d},
$$
and
$$
T_{\theta}(x) = A_{\theta} x + b_{\theta}, \; x \in \RR^{d}.
$$
Since the matrix $A_{\theta}$ belongs to $\Ss_{d}^{+}(\RR)$, it is clear that $T_{\theta}$ is the gradient of the convex function
$$
\phi_{\theta}(x) = \frac{1}{2} x' A_{\theta} x + b_{\theta}'x, \mbox{ for } x \in \RR^{d}.
$$
Moreover, for any $\theta \in \RR^{d}$, it follows that $T_{\theta}  : \RR^{d} \to \RR^{d}$ is a $C^1$ diffeomorphism with inverse given by
$$
T_{\theta}^{-1}(x) =   A_{\theta}^{-1} \left(x - b_{\theta} \right), \; x \in \RR^{d}.
$$
Let $\Theta \subset \RR^{p}$ be a compact set, 
and let $\btheta \in \RR^{p}$ be a random vector with density $g$ (with respect to the Lebesgue measure $d \theta$ on $\RR^{p}$) having a support included in $\Theta$. We propose to study the population barycenter in the Wasserstein space of the random measure $\mu_{\btheta}$ satisfying the  deformable model:
\begin{equation}
\mu_{\btheta}  =  T_{\btheta}  \# \mu_{0}.  \label{eq:deform:measure:theta}
\end{equation}
The above equation may also be interpreted as a semi-parametric model of random densities, where $T_{\btheta}$ is the optimal mapping between $\mu_{0}$ and $\mu_{\btheta}$. For any $\theta \in \Theta$ (not necessarily a random vector), we define $\mu_{\theta}  =  T_{\theta}  \# \mu_{0}$. Since $T_{\theta}$ is a smooth diffeomorphism and $\mu_{0}$ is a measure with density $q_{0}$ whose support is  the compact set $\Omega_{0}$, it follows that $\mu_{\theta}$ admits a density $q_{\theta}$ on $\RR^{d}$ given by
\begin{equation}
q_{\theta}(x)  =
\left\{\begin{array}{ccc}
 \det \left( A_{\theta}^{-1}  \right) q_{0} \left(  A_{\theta}^{-1} \left(x - b_{\theta} \right) \right) & \mbox{ if } & x \in \Omega_{\theta}, \\
0 & \mbox{ if } & x \notin \Omega_{\theta}.
\end{array}
\right. \label{eq:qtheta}
\end{equation}
where $  \Omega_{\theta} = \left\{  T_{\theta} ( y ), y \in \Omega_{0} \right\} = \left\{  A_{\theta} y + b_{\theta}, y \in \Omega_{0} \right\}$. It is clear that $T_{\theta}$ is a $C^{1}$ diffeormorphism form $\Omega_{0}$ to $\Omega_{\theta}$ that is the optimal mapping between $ \mu_{0}$ and $\mu_{\theta}$.

Moreover, from the compactness of $\Omega_{0}$, it follows that $\Omega_{\theta}$ is compact  for all $\theta \in \Theta$. If we further assume that the mapping $\phi : \Theta \to \Ss_{d}^{+}(\RR) \times \RR^{d}$ is continuous, one obtains (by the compactness assumption on $\Theta$) that there exists a radius $r > 0$ such that   $  \Omega_{\theta} \subset \Omega = \overline{B(0,r)}$ for all $\theta \in \Theta$. Thus, under this assumption, the random measure $\mu_{\btheta}$ takes its values in $\MM_{+}(\Omega)$.  

As a consequence of  Theorem \ref{prop:deform:measure}, one has the following result.
\begin{coro}
Assume that the mapping $\phi$ defined in \eqref{eq:mapphi} is continuous, and let $\Omega =   \overline{B(0,r)}$ be  such that $  \Omega_{\theta} \subset \Omega$ for all $\theta \in \Theta$. If
$$
\EE(A_{\btheta}) = I \mbox{ and } \EE(b_{\btheta}) = 0
$$
where $I$ is the identity matrix, then $\mu_{0} = \mu^{\ast}$ is the population barycenter in the Wasserstein space of the random measure $\mu_{\btheta} \in \MM_{+}(\Omega)$ satisfying the deformable model \eqref{eq:deform:measure:theta}.
\end{coro}

In the case where the condition $\EE(A_{\btheta})  = I$ is not necessarily satisfied, we define, for any $\theta \in \Theta$, the following quantities
\begin{equation}
\bar{A}_{\theta} = A_{\theta} \bar{A}^{-1} \quad \mbox{ and }  \quad \bar{b}_{\theta} = b_{\theta} - A_{\theta} \bar{A}^{-1} \bar{b}, \label{eq:barAb}
\end{equation}
where $\bar{A}= \EE \left( A_{\btheta} \right)$ and $\bar{b} = \EE \left( b_{\btheta} \right)$. For any $\theta \in \Theta$, we also define the mapping
$$
\bar{T}_{\theta}(x) = T_{\theta} (\bar{T}^{-1}(x)) = \bar{A}_{\theta} x + \bar{b}_{\theta}, \; x \in \RR^{d},
$$
where $\bar{T}(x) = \bar{A}x + \bar{b} $. To apply Theorem \ref{prop:deform:measure}, it is necessary to assume that $\bar{T}_{\theta}$ is the gradient of a convex function, which means assuming that $\bar{A}_{\theta} = A_{\theta} \bar{A}^{-1}$ is a positive definite matrix. A sufficient condition to have this property is to assume that the product $A_{\theta} \bar{A}^{-1}$ is symmetric for any $\theta \in \Theta$, since the product of two matrices in $\Ss_{d}^{+}(\RR)$ is positive definite if and only if their product is symmetric (see e.g.\ \cite{Meenakshi19993}). Under such assumptions, it is then possible to apply Theorem \ref{prop:deform:measure} to obtain  following result.

\begin{coro}
Assume that the mapping $\phi$ defined in \eqref{eq:mapphi} is continuous, and let $\Omega =   \overline{B(0,r)}$ be  such that $  \Omega_{\theta} \subset \Omega$ for all $\theta \in \Theta$. If
$
A_{\theta} \bar{A}^{-1} \mbox{ is symmetric for any } \theta \in \Theta,
$
then
$$
\mu^{\ast} = \bar{T} \# \mu_{0}.
$$
is the population barycenter in the Wasserstein space of the random measure $\mu_{\btheta} \in \MM_{+}(\Omega)$ satisfying the deformable model \eqref{eq:deform:measure:theta}.
\end{coro}
 
\section{Convergence of the empirical barycenter} \label{sec:empbary}

Let us now prove the convergence of the empirical barycenter for the set of  measures considered in this paper whose supports are included in the compact set $\Omega$.  We recall that this assumption implies that the Wasserstein space $(\MM_{+}(\Omega),d_{W_{2}})$ is compact. This is the key property that we use to study the convergence of the empirical barycenter. 

Let $\btheta_{1},\ldots,\btheta_{n}$ be iid random variables in $\RR^{p}$ with distribution $\P_{\Theta}$. Then, let us define the functional
\begin{eqnarray}
J_{n}(\nu) & = &   \frac{1}{n} \sum_{j=1}^{n} \frac{1}{2} d^{2}_{W_{2}}(\nu, \mu_{\btheta_{j}} ),   \; \nu \in \MM_{+}(\Omega) \label{eq:Jn}, 
\end{eqnarray}
and consider the optimization problem: find an empirical barycenter
\begin{equation}
\bar{\bmu}_{n} \in \argmin_{\nu \in \MM_{+}(\Omega)} J_{n}(\nu),    \label{eq:pbemp}
\end{equation}
Thanks to the results in \cite{MR2801182}, the following lemma holds:
\begin{lem} \label{lem:exists} Suppose that Assumption \ref{ass:Phi} holds.
Then, for any $n \geq 1$, there exists a unique minimizer $\bar{\bmu}_{n}$ of  $J_{n}(\cdot)$ over $\MM_{+}(\Omega)$.
\end{lem}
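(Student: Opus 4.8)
The plan is to observe that the optimisation problem \eqref{eq:pbemp} is precisely the empirical (finite) barycenter problem studied by Agueh and Carlier in \cite{MR2801182}: it asks for a barycenter of the fixed measures $\mu_{\btheta_1},\ldots,\mu_{\btheta_n} \in \MM_{+}^{2}(\Omega)$ with equal weights $1/n$. Consequently both existence and uniqueness can be read off from their results, and I would organise the proof as two independent steps, existence and uniqueness, neither of which requires the compactness Assumption \ref{ass:compact} (which is deliberately absent from the statement).

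For existence I would run a standard direct-method argument. Let $\nu^k$ be a minimising sequence for $J_n$. The first task is to bound the second moments uniformly: fixing one measure, say $\mu_{\btheta_1}$, the triangle inequality for $d_{W_2}$ gives
$$
\left( \int_{\Omega} |x|^2 \, d\nu^k(x) \right)^{1/2} = d_{W_2}(\nu^k, \delta_0) \leq d_{W_2}(\nu^k, \mu_{\btheta_1}) + d_{W_2}(\mu_{\btheta_1}, \delta_0),
$$
where $\delta_0$ is the Dirac mass at the origin. Since $d^2_{W_2}(\nu^k,\mu_{\btheta_1}) \leq 2n\, J_n(\nu^k)$ stays bounded along the minimising sequence, and $d_{W_2}(\mu_{\btheta_1},\delta_0) = (\int_\Omega |x|^2 d\mu_{\btheta_1})^{1/2} < +\infty$ because $\mu_{\btheta_1} \in \MM_{+}^{2}(\Omega)$, the quantity $\int_{\Omega}|x|^2 d\nu^k$ is uniformly bounded. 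By Chebyshev's inequality the family $(\nu^k)$ is then tight, so Prokhorov's theorem yields a subsequence converging weakly to some $\bar{\bmu}_n \in \MM_{+}^{2}(\Omega)$; the lower semicontinuity of $d^2_{W_2}(\cdot,\mu)$ under weak convergence gives $J_n(\bar{\bmu}_n) \leq \liminf_k J_n(\nu^k) = \inf J_n$, so $\bar{\bmu}_n$ is a minimiser. This mirrors the argument of Proposition \ref{prop:exist}, but uses the triangle inequality rather than the Kantorovich duality formula to control the moments, which is exactly what lets us dispense with compactness.

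For uniqueness the cleanest route is strict convexity. The functional $J_n$ has the same form as $J$ in \eqref{eq:Jnu}, with the density $g\,d\theta$ replaced by the uniform weights $\tfrac1n$ on $\{\btheta_1,\dots,\btheta_n\}$; each summand $\nu \mapsto \tfrac12 d^2_{W_2}(\nu,\mu_{\btheta_j})$ is convex, and is \emph{strictly} convex because $\mu_{\btheta_j}$ admits a density under Assumption \ref{ass:mutheta} (Lemma 3.2.1 in \cite{MR3004954}, exactly as invoked in the proof of Lemma \ref{lem:convex}). A positively weighted finite sum of such functions is again strictly convex, so two distinct minimisers would contradict strict convexity along the segment joining them. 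Equivalently, since at least one (in fact every) $\mu_{\btheta_j}$ is absolutely continuous, the uniqueness criterion of \cite{MR2801182} applies verbatim.

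The only point requiring genuine care is that the statement drops Assumption \ref{ass:compact}, so existence must be argued on all of $\RR^{d}$ rather than on a compact $\Omega$; I expect this to be the main, though still mild, obstacle. It is resolved by the moment bound displayed above together with the weak lower semicontinuity of the Wasserstein distance. Everything else is a direct transcription of the Agueh--Carlier theory of barycenters of finitely many measures.
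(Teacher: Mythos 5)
Your proof is correct and is essentially the argument the paper has in mind: the paper's own ``proof'' of Lemma \ref{lem:exists} is a one-line appeal to the results of Agueh and Carlier \cite{MR2801182}, and your direct-method existence argument together with the strict-convexity uniqueness argument (via absolute continuity of each $\mu_{\btheta_j}$ under Assumption \ref{ass:mutheta}) is exactly what that citation encapsulates. The only points left implicit are minor: the weak limit produced by Prokhorov's theorem must be checked to lie in $\MM_{+}^{2}(\Omega)$ (its finite second moment follows from the uniform moment bound, and one needs $\Omega$ closed for the limit to be supported in $\Omega$), and in Section \ref{sec:empbary} the lemma is in any case invoked under the standing compactness Assumption \ref{ass:compact}, so your care to avoid it, while valid and slightly more general, is not required there.
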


Let us now give our main result on the convergence of the empirical barycenter $\bar{\bmu}_{n}$.

\begin{theo} \label{theo:conv}
Suppose that   
Assumption \ref{ass:Phi}   hold. Let $\mu^{\ast}$ be the population barycenter defined by \eqref{eq:pbgen}, and $\bar{\bmu}_{n}$ be  the empirical barycenter defined by \eqref{eq:pbemp}. Then,
$$
\lim_{n \to + \infty} d_{W_{2}}(\bar{\bmu}_{n},\mu^{\ast}) = 0 \mbox{ almost surely (a.s.)}
$$
\end{theo}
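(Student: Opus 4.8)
The goal is to show almost‑sure convergence of the empirical barycenter $\bar{\bmu}_n$ to the population barycenter $\mu^{\ast}$ in the Wasserstein metric. The natural strategy is an argument of $M$‑estimation / $\Gamma$‑convergence type: the empirical criterion $J_n$ is a sample‑average approximation of the population criterion $J$, and one wants to transfer the convergence of the criteria to convergence of their minimizers. Since Assumption \ref{ass:compact} forces $\Omega$ compact, the space $(\MM_{+}^{2}(\Omega),d_{W_{2}})$ is itself compact, which is the crucial feature that makes the whole scheme go through. I would carry out the proof in three stages: (i) pointwise (and then uniform) convergence of $J_n$ to $J$, (ii) compactness/extraction of a limit of $\bar{\bmu}_n$, and (iii) identification of that limit with $\mu^{\ast}$ via strict convexity and uniqueness.

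\textbf{Step 1: law of large numbers for the criterion.}
For each fixed $\nu \in \MM_{+}^{2}(\Omega)$, the quantities $\tfrac{1}{2}d^{2}_{W_{2}}(\nu,\mu_{\btheta_j})$ are iid, and they are uniformly bounded because $\Omega$ is compact (so $d_{W_{2}}(\nu,\mu) \leq \delta(\Omega)$ for all pairs). Hence the strong law of large numbers gives $J_n(\nu) \to J(\nu)$ a.s. for each fixed $\nu$. The real work is to upgrade this to \emph{uniform} convergence over all of $\MM_{+}^{2}(\Omega)$, i.e.\ $\sup_{\nu} |J_n(\nu) - J(\nu)| \to 0$ a.s. Here I would exploit that each map $\nu \mapsto \tfrac12 d^2_{W_2}(\nu,\mu_{\btheta_j})$ is Lipschitz in $\nu$ with a constant controlled by $\delta(\Omega)$ (from $|d_{W_2}(\nu,\mu)-d_{W_2}(\nu',\mu)|\leq d_{W_2}(\nu,\nu')$ and the bounded diameter), so the family $\{J_n\}$ is equi‑Lipschitz and equibounded. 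Combining pointwise SLLN on a countable dense subset of the \emph{compact} space $\MM_{+}^{2}(\Omega)$ with this equicontinuity yields uniform convergence a.s.\ — this is the standard ``pointwise plus equicontinuity on a compact space implies uniform'' argument, and it is the step I expect to require the most care.

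\textbf{Step 2 and 3: extraction and identification.}
By compactness of $\MM_{+}^{2}(\Omega)$, along any sample path the sequence $\bar{\bmu}_n$ has subsequential limits; let $\tilde\mu$ be any such limit, $\bar{\bmu}_{n_k} \to \tilde\mu$. On the event of full probability where $\sup_\nu|J_n-J|\to 0$, I would argue
$$
J(\tilde\mu) = \lim_k J_{n_k}(\bar{\bmu}_{n_k}) \leq \lim_k J_{n_k}(\mu^{\ast}) = J(\mu^{\ast}),
$$
where the first equality uses uniform convergence together with continuity of $J$ (again Lipschitz in $\nu$), the inequality uses that $\bar{\bmu}_{n_k}$ minimizes $J_{n_k}$, and the last equality uses Step 1 applied at the fixed point $\mu^{\ast}$. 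Since $\mu^{\ast}$ is the \emph{unique} global minimizer of $J$ (Lemma \ref{lem:convex} and Proposition \ref{prop:exist}), this forces $\tilde\mu = \mu^{\ast}$. As every subsequential limit equals $\mu^{\ast}$ and the ambient space is compact (metric), the whole sequence converges: $d_{W_2}(\bar{\bmu}_n,\mu^{\ast}) \to 0$ a.s.

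\textbf{Main obstacle.}
The delicate point is the uniform law of large numbers of Step 1; everything downstream is a soft compactness‑plus‑uniqueness argument. One must be slightly attentive to the order of quantifiers for the ``almost surely'': the naive pointwise SLLN gives, for each fixed $\nu$, a full‑probability event that depends on $\nu$, and one needs a single full‑probability event valid simultaneously for all $\nu$. Passing through a countable dense set and then using equi‑Lipschitz continuity to fill in the gaps is what resolves this, and it is where I would be most careful. If a quantitative uniform bound on $|J_n - J|$ is desired rather than mere a.s.\ convergence, one could instead invoke a Glivenko–Cantelli / bracketing‑entropy estimate for the bounded equi‑Lipschitz class $\{\nu \mapsto \tfrac12 d^2_{W_2}(\nu,\cdot)\}$ on the compact Wasserstein space, but for the stated a.s.\ convergence the dense‑subset argument suffices.
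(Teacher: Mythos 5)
Your proposal is correct and follows essentially the same route as the paper: a uniform law of large numbers for $J_n$ over the compact space $(\MM_{+}^{2}(\Omega),d_{W_{2}})$, followed by subsequential extraction and identification of the limit with $\mu^{\ast}$ via uniqueness of the minimizer. The only (inessential) difference is in Step 1, where the paper obtains the uniform convergence by viewing $\FF=\{\mu\mapsto\tfrac12 d^{2}_{W_{2}}(\nu,\mu)\}$ as a uniformly bounded equicontinuous class and invoking a Ranga Rao--type theorem on weak convergence of the empirical measure $\P_{g}^{n}$, whereas you propose the equivalent dense-subset-plus-equi-Lipschitz argument.
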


\begin{proof}
For $\nu \in \MM_{+}(\Omega)$, let us define
$$
\Delta_{n}(\nu) = J_{n}(\nu) - J(\nu).
$$
The proof is divided in two steps. It follows standard arguments to obtain a strong law of large number for random variables belonging to compact metric spaces that can be found in  \cite{MR600540}. Theorem \ref{theo:conv} can be seen as  a particular case of the general results in  \cite{MR600540}. But in order to be self contained  we include a detailed proof that is inspired by the arguments in \cite{MR600540}.  First, we prove the uniform convergence to zero of $\Delta_{n}$ over $\MM_{+}(\Omega)$. Then, we show that any converging subsequence of  $\bar{\bmu}_{n}$ converges a.s.\ to  $\mu^{\ast}$ for the 2-Wasserstein distance. \\

\noindent Step 1.  For $ \nu \in \MM_{+}(\Omega)$, let us denote by $f_{\nu} :  \MM_{+}(\Omega) \to \RR$ the real-valued function  defined by
$$
f_{\nu}(\mu) = \frac{1}{2} d^{2}_{W_{2}}(\nu,\mu). 
$$
Then, let us define the following class of functions
$$
\FF = \left\{ f_{\nu}, \; \nu \in \MM_{+}(\Omega)  \right\}.
$$

Let $\delta^2(\Omega) = \sup_{x \in \Omega} \{ |x|^2\}$. Since $\Omega$ is compact, it follows that $\FF$ is a  class of functions uniformly bounded by $2\delta^{2}(\Omega)$ (for the supremum norm). Now, let $\nu, \mu, \mu' \in \MM_{+}(\Omega)$. By the triangle reverse inequality
\begin{eqnarray*}
|f_{\nu}(\mu) - f_{\nu}(\mu')| & = & \frac{1}{2} \left| d^{2}_{W_{2}}(\nu,\mu) - d^{2}_{W_{2}}(\nu,\mu')  \right| \leq  \sqrt{2} \delta(\Omega)\left| d_{W_{2}}(\nu,\mu) - d_{W_{2}}(\nu,\mu')  \right| \\
& \leq &    \sqrt{2} \delta(\Omega) d_{W_{2}}(\mu,\mu').
\end{eqnarray*}
The above inequality proves that $\FF$ is an equicontinuous family of functions. Now, let $\btheta_{1},\ldots,\btheta_{n}$ be iid random vectors in $\RR^{p}$ with density $g$, and let us define the random empirical measure on $\left(\MM_{+}(\Omega), \BB \left( \MM_{+}(\Omega) \right) \right)$
$$
\P_{g}^{n} = \frac{1}{n} \sum_{i=1}^{n} \delta_{\mu_{\btheta_{i}}} =  \frac{1}{n} \sum_{i=1}^{n} \delta_{\phi\left(\btheta_{i}\right)} ,
$$
where $\delta_{\mu}$ denotes the Dirac measure. It is clear that
$$
\Delta_{n}(\nu) = \int_{\MM_{+}(\Omega)}  f_{\nu}(\mu) d \P_{g}^{n}(\mu) -  \int_{\MM_{+}(\Omega)}  f_{\nu}(\mu) d \P_{g} (\mu).
$$
Let  $f : \MM_{+}(\Omega) \to \RR$ be a real-valued function that is continuous (for the topology induced by $d_{W_{2}}$) and bounded. Thanks to the mesurability of the mapping $\phi$, one has that the real random variable $  \int_{\MM_{+}(\Omega)}  f(\mu) d \P_{g}^{n}(\mu)$ converges a.s.\ to $  \int_{\MM_{+}(\Omega)}  f(\mu) d \P_{g}(\mu)$ as $n \to + \infty$, meaning that the random measure $\P_{g}^{n}$ a.s.\ converges to $\P_{g}$ in the weak sense. Therefore, since $\FF$ is a uniformly bounded and equicontinuous family of functions, one can use Theorem 6.2 in \cite{MR0137809} to obtain that
\begin{equation}
\sup_{\nu \in \MM_{+}(\Omega)} \left| \Delta_{n}(\nu) \right| = \sup_{f \in \FF}  \left| \int_{\MM_{+}(\Omega)}  f(\mu) d \P_{g}^{n}(\mu) -  \int_{\MM_{+}(\Omega)}  f(\mu) d \P_{g} (\mu) \right| \to 0 \mbox{ as } n \to + \infty, \; a.s. \label{eq:convunif}
\end{equation}
which proves the uniform convergence of $\Delta_{n}$ to zero over $\MM_{+}(\Omega)$. \\

\noindent Step 2. By Lemma \ref{lem:exists}, there exists a unique sequence $(\bar{\bmu}_{n})_{n \geq 1}$ of empirical barycenters defined by \eqref{eq:pbemp}. Thanks to the compactness of the Wasserstein space $(\MM_{+}(\Omega),d_{W_{2}})$, one can extract a converging sub-sequence of empirical barycenters  $(\bar{\bmu}_{n_{k}})_{k \geq 1}$ such that $\lim_{k \to + \infty} d_{W_{2}}(\bar{\bmu}_{n_{k}}, \bar{\bmu}) = 0$ for some measure $\bar{\bmu} \in \MM_{+}(\Omega)$.

Let us now prove that $\bar{\bmu} = \mu^{\ast}$. To this end, let us first note that by the definition of $\bar{\bmu}_{n_{k}}$ and $\mu^{\ast}$ as the unique minimizer of $J_{n_{k}}(\cdot)$ and $J(\cdot)$ respectively, it follows that
\begin{eqnarray*}
\left| J(\bar{\bmu}_{n_{k}}) -  J(\mu^{\ast}) \right|  & =  & J(\bar{\bmu}_{n_{k}})  - J_{n_{k}}(\bar{\bmu}_{n_{k}}) + J_{n_{k}}(\bar{\bmu}_{n_{k}}) - J_{n_{k}}(\mu^{\ast}) + J_{n_{k}}(\mu^{\ast}) -  J(\mu^{\ast}) \\
& \leq & 2 \sup_{\nu \in \MM_{+}(\Omega)} \left| \Delta_{n_{k}}(\nu) \right|,
\end{eqnarray*}
where we have used the fact that $J_{n_{k}}(\bar{\bmu}_{n_{k}}) - J_{n_{k}}(\mu^{\ast}) \leq 0$. Therefore, thanks to  the uniform convergence  \eqref{eq:convunif}   of $\Delta_{n}$ to zero over $\MM_{+}(\Omega)$, one obtains that
\begin{equation}
\lim_{k \to + \infty} J(\bar{\bmu}_{n_{k}})  = J(\mu^{\ast}). \label{eq:conv}
\end{equation}
Therefore, using that
\begin{eqnarray*}
\left|  J_{n_{k}}(\bar{\bmu}_{n_{k}})  - J(\mu^{\ast}) \right| & \leq  & \left|  J_{n_{k}}(\bar{\bmu}_{n_{k}})  - J(\bar{\bmu}_{n_{k}})\right|  + \left|  J(\bar{\bmu}_{n_{k}})- J(\mu^{\ast}) \right|  \\
& \leq & \sup_{\nu \in \MM_{+}(\Omega)} \left| \Delta_{n_{k}}(\nu) \right| + \left|  J(\bar{\bmu}_{n_{k}})- J(\mu^{\ast}) \right|,
\end{eqnarray*}
one finally obtains by  \eqref{eq:convunif}    and \eqref{eq:conv} that
\begin{equation}
\lim_{k \to + \infty} J_{n_{k}}(\bar{\bmu}_{n_{k}})   = J(\mu^{\ast}) \label{eq:conv0}.
\end{equation}
Since $|J_{n_{k}}(\bar{\bmu}) - J(\bar{\bmu})| \leq \sup_{\nu \in \MM_{+}(\Omega)} \left| \Delta_{n_{k}}(\nu) \right| $,    it follows by equation \eqref{eq:convunif}  that
\begin{equation}
\lim_{k \to + \infty} J_{n_{k}}(\bar{\bmu}) = J(\bar{\bmu}) \; a.s. \label{eq:conv1}
\end{equation}
Moreover, for any $\epsilon > 0$, there exists $k_{\epsilon} \in \N$ such that $d_{W_{2}}(\bar{\bmu}_{n_{k}}, \bar{\bmu}) \leq \epsilon$ for all $k \geq k_{\epsilon}$. Therefore, using the triangle inequality, it follows that for all $k \geq k_{\epsilon}$
\begin{eqnarray*}
\left( J_{n_{k}}(\bar{\bmu})  \right)^{1/2} & = & \left( \frac{1}{n_{k}} \sum_{j=1}^{n_{k}} \frac{1}{2} d^{2}_{W_{2}}(\bar{\bmu}, \mu_{\btheta_{j}} )  \right) ^{1/2} \\
& \leq &  \left( \frac{1}{n_{k}} \sum_{j=1}^{n_{k}} \frac{1}{2} d^{2}_{W_{2}}(\bar{\bmu}_{n_{k}}, \mu_{\btheta_{j}} )  \right) ^{1/2} +  \left( \frac{1}{n_{k}} \sum_{j=1}^{n_{k}} \frac{1}{2} d^{2}_{W_{2}}(\bar{\bmu}, \bar{\bmu}_{n_{k}} )  \right) ^{1/2} \\
& \leq &  \left( \frac{1}{n_{k}} \sum_{j=1}^{n_{k}} \frac{1}{2} d^{2}_{W_{2}}(\bar{\bmu}_{n_{k}}, \mu_{\btheta_{j}} )  \right) ^{1/2}  + \frac{\epsilon}{\sqrt{2}},
\end{eqnarray*}
and thus by equations \eqref{eq:conv0} and \eqref{eq:conv1}, we obtain that
 \begin{equation}
J(\bar{\bmu})    \leq  \lim_{k \to + \infty}  J_{n_{k}}(\bar{\bmu}_{n_{k}}) =  J(\mu^{\ast})   \; a.s. \label{eq:conv2}
\end{equation}
which finally proves that $\bar{\bmu} = \mu^{\ast}$ a.s.\ since $\mu^{\ast}$ is the unique minimizer of $J(\nu)$ over $\nu  \in \MM_{+}(\Omega)$.

Hence, any converging subsequence of empirical barycenters  converges a.s.\ to  $\mu^{\ast}$ for the 2-Wasserstein distance. Since $(\MM_{+}(\Omega),d_{W_{2}})$ is compact, this finally shows that $(\bar{\bmu}_{n})_{n \geq 1}$ is a converging sequence such that $\lim_{n \to + \infty} d_{W_{2}}(\bar{\bmu}_{n}, \mu^{\ast}) = 0$ a.s.\  which completes the proof of Theorem \ref{theo:conv}.

\end{proof}

\section{Beyond the compactly supported case} \label{sec:persp}

To conclude the paper, we briefly discuss the case of a random measure $\mu_{\btheta}$ with  distribution $\P_{\Theta}$ whose support is not included in a compact set $\Omega$ of $\RR^{d}$.  In the one-dimensional case i.e.\ $d=1$, let  us denote by  $F_{\mu_{\btheta}}$ its cumulative distribution function, and by $F_{\mu_{\btheta}}^{-1}$ its generalized inverse (quantile function). Provided that one can define the  measure  $\mu^{\ast}  $  with quantile function  $F_{\mu^{\ast}}^{-1}(y) = \EE  \left( F^{-1}_{\mu_{\btheta}}(y)  \right)$ for all $y \in [0,1]$, it is expected that  arguments similar to those used in the proof of Theorem \ref{theo:1D} can be used to prove that $\mu^{\ast}$ is the unique population barycenter of the random measure $\mu_{\btheta}$  with distribution $\P_{\Theta}$.  

The multi-dimensional case (i.e.\ $d \geq 2$) is more involved. Indeed, the arguments that we used to prove the main results of the paper for $d \geq 2$ strongly depend on the compactness assumption on the support of the random measure $\mu_{\btheta}$. Adapting these arguments for the extension of this paper to non-compactly supported measures is an interesting topic for future investigations.

\bibliographystyle{plain}
\bibliography{barycenter_optimal_transport_revESAIM_PS}

\end{document}